\tikzstyle{vertex}=[circle,draw=black,fill=black,inner sep=0,minimum size=3pt,text=white,font=\footnotesize]
\newtheorem{thm}{Theorem}[section]
\newtheorem{lemma}[thm]{Lemma}
\newtheorem{proposition}[thm]{Proposition}
\newtheorem{clm}[thm]{Claim}
\newtheorem*{lemma*}{Lemma}
\newtheorem*{proposition*}{Proposition}
\newtheorem*{theorem*}{Theorem}
\newcommand\ex{\ensuremath{\mathrm{ex}}}
\newcommand\cA{{\mathcal A}}
\newcommand\cB{{\mathcal B}}
\newcommand\cE{{\mathcal E}}
\newcommand\cF{{\mathcal F}}
\newcommand\cG{{\mathcal G}}
\newcommand\cH{{\mathcal H}}
\newcommand\cN{{\mathcal N}}
\newcommand{\ignore}[1]{}
\title{Generalized Tur\'an results for intersecting cliques}
\author{Dániel Gerbner$^1$, Bal\'azs Patk\'os$^{1,2}$\\ \small $^1$ Alfr\'ed R\'enyi Institute of Mathematics\\ \small $^2$ Moscow Institute of Physics and Technology}
\date{}
\begin{document}

\maketitle

\begin{abstract}
    For fixed graphs $F$ and $H$, the generalized Turán problem asks for the maximum number $\ex(n,H,F)$ of copies of $H$ that an $n$-vertex $F$-free graph can have. In this paper, we focus on the case when $F$ is $B_{r,s}$, the graph consisting of two cliques of size $r$ sharing $s$ common vertices. We determine $\ex(n,K_t,B_{r,0})$, $\ex(n,K_t,B_{r,1})$ and $\ex(n,K_{a,b},B_{3,1})$ for all values of $a,b,r,t$ if $n$ is large enough.
\end{abstract}

\section{Introduction}

A central question in extremal graph theory, the so-called Turán problem asks for the maximum number $\ex(n,F)$ of edges that an $n$-vertex graph $G$ can have without containing $F$ as a subgraph. Graphs with this property are called \textit{$F$-free}. The asymptotics of $\ex(n,F)$ is given by the celebrated Erd\H os-Stone-Simonovits theorem \cite{ES} if the chromatic number of $F$ is at least three. For results and open problems in the case when $F$ is bipartite, see the survey by Füredi and Simonovits \cite{FS}.

A natural generalization of this problem is to maximize the number of copies of some other graph $H$ while forbidding $F$ as subgraph. This maximum is denoted by $\ex(n,H,F)$. More precisely, we denote by $\cN(H,G)$ the number of (unlabeled) copies of $H$ in $G$, and $\ex(n,H,F):=\max\{\cN(H,G): G \text{ is an $F$-free graph on $n$ vertices}\}$. So the original problem is the $H=K_2$ case and $\ex(n,F)=\ex(n,K_2,F)$. More generally, for a family $\cF$ of graphs, we denote by $\ex(n,H,\cF)$ the maximum number of copies of $H$ in $n$-vertex graphs that do not contain any member of $\cF$. After some very interesting but sporadic results \cite{BGy,G,Hetal}, these so-called generalized Turán problems were first addressed systematically by Alon and Shikhelman \cite{AS}.

In this paper we study the case where $H$ consists of two cliques sharing some vertices.
Let us denote by $B_{r,s}$ the graph consisting of two $r$-cliques sharing exactly $s$ vertices. We also call it a \textit{generalized book graph}. We call the vertices shared by the two $r$-cliques \textit{rootlet} vertices, and the other vertices of the book graph are \textit{page} vertices. The page vertices are partitioned into two pages, according to which of the two $r$-cliques they belong to.

Let us denote by $G_1+G_2$ the graph consisting of a vertex disjoint pair of copies of $G_1$ and $G_2$ and by $kG$ the graph consisting of $k$ vertex-disjoint copies of $G$. Let $T(m,s)$ denote the Turán graph, which is the complete $s$-partite graph on $m$ vertices with each part having order $\lfloor m/s\rfloor$ or $\lceil m/s\rceil$. For graphs $G_1,G_2$, their join $G_1\vee G_2$ denotes the graph obtained by taking vertex disjoint copies of $G_1,G_2$ and joining every pair $v_1,v_2$ of vertices with $v_1\in V(G_1),v_2\in V(G_2)$. For a set $U\subset V(G)$, we denote by $G[U]$ the subgraph of $G$ induced by $U$, i.e., the subgraph we obtain by deleting the vertices not in $U$.

As it was observed by  Clark, Entringer, McCanna, and Sz\'ekely \cite{cemsz}, the celebrated 6-3 theorem of Ruzsa and Szemer\'edi \cite{rsz} can be reformulated the following way: the largest number of edges in an $n$-vertex graph where every edge is contained in exactly one triangle is $o(n^2)$ but at least $n^{2-o(1)}$. This implies the same bounds on $\ex(n,K_3,B_{3,2})$. Gowers and Janzer \cite{GJ} (motivated by a rainbow variant of generalized Tur\'an problems \cite{gmmp}) generalized this by showing that for $2\le s<r$, we have $n^{s-o(1)}<\ex(n,K_r,\{B_{r,s},B_{r,s+1},\dots,B_{r,r-1}\})=o(n^s)$. Liu and Wang \cite{LW} initiated the study of $\ex(n,K_r,B_{r,s})$. They determined its value exactly for $s=0$ and $s=1$ in the case $n$ is large enough, and gave bounds in the case of other values of $s$.

In this paper we extend these investigations to counting other graphs. Let us first discuss other results that fit into this setting.
If $s=0$, we forbid two vertex-disjoint copies of $K_r$. Moon \cite{moon} showed that $\ex(n,kK_r)=|E(K_{k-1}\vee T(n-k+1,r-1))|$. Concerning generalized Tur\'an problems, $\ex(n,H,kG)$ was studied in \cite{gmv}, in particular the order of magnitude of $\ex(n,K_\ell,k K_r)$ was determined there. 

If $s\ge 2$, then $B_{r,s}$ has a color-critical edge,  i.e., an edge whose deletion decreases the chromatic number. Simonovits \cite{S} showed that for an $r$-chromatic graph $F$ with a color-critical edge, $\ex(n,F)=|E(T(n,r-1)|$. Ma and Qiu \cite{MQ} showed that if $k<r$, then $\ex(n,K_k,F)=\cN(K_k,T(n,r-1))$. Gerbner and Palmer \cite{gerpal} and Gerbner \cite{ger} showed that we also have $\ex(n,P_3,F)=\cN(P_3,T(n,r-1))$. Gerbner \cite{ger2} presented a theorem that determines the exact value of $\ex(n,H,B_{r,s})$ for a class of graphs $H$ if $n$ is large enough. 

In the case $s=1$, we have $\ex(n,B_{r,1})=|E(T(n,r-1)|+1$ and the extremal construction is the Tur\'an graph with an arbitrary additional edge. This was proved in \cite{efgg} for $r=3$ and in \cite{cgpw} for larger $r$.
Gerbner and Palmer \cite{gerpal} determined $\ex(n,C_4,B_{3,1})$. $B_{r,1}$ has a color-critical vertex, i.e., a vertex whose deletion decreases the chromatic number (from $r$ to $r-1$). Gerbner \cite{ger2} determined $\ex(n,H,F)$ for every $r$-chromatic graph $F$ with a color-critical vertex if $H$ is a complete balanced $(r-1)$-partite graph $K_{a,\dots,a}$ with $a$ large enough. In particular, this determines $\ex(n,K_{a,a},B_{3,1})$ for every $a$.

For a family of graphs $\cH$, we denote by $\ex(n,\cH,F)$ the largest value of $\sum_{H\in \cH} \cN(H,G)$ $n$-vertex $F$-free graphs $G$. The study of counting multiple graphs in generalized Tur\'an problems was initiated in \cite{gerbn}.
Now we are ready to state our results.

\begin{thm}\label{2kr}
For any $r$ and large enough $n$, we have the following:

(i) if $k< r$, then $\ex(n,K_k,2K_r)=\cN(K_k,K_1\vee T(n-1,r-1))$,

(ii) if $r\le k<2r$, then $\ex(n,\{K_k,K_{k+1},\dots,K_{2r-1}\},2K_r)=\cN(K_k,K_{2k-2r+1}\vee T(n-2k+2r-1,2r-k-1)$.
\end{thm}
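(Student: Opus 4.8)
The plan is to prove both parts by the now-standard "progressive induction / stability" strategy for generalized Turán problems with a forbidden disjoint union, comparing an arbitrary extremal graph $G$ to the conjectured optimum. For part (i), note that $2K_r$-free simply means $G$ does not contain two vertex-disjoint copies of $K_r$; so either $G$ is $K_r$-free, or else there is a copy $Q$ of $K_r$ such that $G-V(Q)$ is $K_r$-free. In the first case, by the Erdős–Stone–Simonovits/Zykov-type bound $\cN(K_k,G)\le\cN(K_k,T(n,r-1))$, which is smaller than the claimed value $\cN(K_k,K_1\vee T(n-1,r-1))$ for large $n$ (the join with a dominating vertex strictly helps since $k<r$). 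In the second case, fix such a $Q=\{q_1,\dots,q_r\}$ and split the copies of $K_k$ in $G$ according to how many of their vertices lie in $V(Q)$; the copies avoiding $V(Q)$ number at most $\cN(K_k,T(n-r,r-1))$, and for each nonempty $S\subseteq V(Q)$ the copies meeting $V(Q)$ in exactly $S$ are controlled by $\cN(K_{k-|S|},G[N(S)\setminus V(Q)])$ where $G[N(S)\setminus V(Q)]$ is again $K_r$-free. Summing and optimizing shows the count is maximized when exactly one vertex of $Q$ is "used" as a universal vertex and the rest of the graph is $T(n-1,r-1)$ — i.e.\ one gets the bound $\cN(K_k,K_1\vee T(n-1,r-1))$, with equality analysis forcing the extremal structure. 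One must also check the construction $K_1\vee T(n-1,r-1)$ is indeed $2K_r$-free: any $K_r$ must use a vertex of $T(n-1,r-1)$ from each of its $r-1$ parts plus possibly the apex, so two disjoint $K_r$'s would need $2(r-1)$ vertices in $r-1$ parts with no two disjoint transversals both avoiding... actually since each $K_r$ needs at least $r-1$ vertices outside the apex, one from each part, two disjoint ones need the apex to be in at most one, so the other is a $K_r$ inside $T(n-1,r-1)$, impossible.

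For part (ii), the construction is $K_{2k-2r+1}\vee T(n-2k+2r-1,2r-k-1)$; write $m=2k-2r+1$ and $p=2r-k-1$, so the graph is a clique of size $m$ completely joined to $T(n-m,p)$, and the clique number is $m+p=r-1<r$... wait, that would make it $K_r$-free hence trivially $2K_r$-free, but also it would then not be extremal among $K_{2r-1}$-counts — so the right reading is that the whole family $\{K_k,\dots,K_{2r-1}\}$ is forbidden-nothing; rather we are counting $\sum_{j=k}^{2r-1}\cN(K_j,G)$ over $2K_r$-free $G$. Since $m+p=r-1$, the construction has no $K_r$ at all, so it is vacuously $2K_r$-free, and it contains many copies of $K_j$ for $k\le j\le r-1$; for $j\ge r$ it contains none, so the sum is $\sum_{j=k}^{r-1}\cN(K_j,\,K_m\vee T(n-m,p))$. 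The proof again splits an extremal $G$ into the $K_r$-free case versus the case $G-V(Q)$ is $K_r$-free for some $K_r$ copy $Q$; in the latter case one iterates, peeling off up to... here is the key point: in a $2K_r$-free graph one cannot peel off two disjoint copies of $K_r$, so after removing $V(Q)$ the remainder is $K_r$-free, but $G[V(Q)]$ together with its common neighbourhoods can still be rich. The bookkeeping is to bound $\sum_{j=k}^{2r-1}\cN(K_j,G)$ by classifying each clique by its trace on $V(Q)$ and using that each "link" graph is $K_r$-free; a convexity/optimization argument over how the $r$ vertices of $Q$ are distributed as universal vertices versus Turán-part vertices yields that the optimum is $m=2k-2r+1$ universal vertices and a $(2r-k-1)$-partite Turán graph on the rest. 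I also need the matching lower bound: verify $K_m\vee T(n-m,p)$ is $2K_r$-free (clear, as shown) and compute its clique counts to see it meets the bound, and check the boundary cases $k=r$ (then $m=1$, $p=r-1$, recovering part (i)'s flavour) and $k=2r-1$ (then $m=2r-1$, $p=0$, the graph is just $K_{2r-1}$, plus... $n-m$ isolated-ish vertices? No: $T(n-m,0)$ is empty on $n-m$ vertices with no edges, join gives $K_{2r-1}\vee \overline{K_{n-2r+1}}$, which has exactly one $K_{2r-1}$ and is $2K_r$-free since any $K_r$ uses $\ge r-1\ge r - ... $ vertices of the clique — actually any two disjoint $K_r$'s need $2(r-1)$ clique vertices but only $2r-1<2(r-1)$ when... $2r-1\ge 2r-2$, so it is tight and one checks directly no two disjoint copies fit).

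The main obstacle I expect is the equality/stability analysis in the "second case" of each part: showing not merely that the count is bounded by the conjectured value but that the bound is attained only by the stated construction, which requires a careful argument that the extra non-Turán edges outside the peeled clique $Q$ (beyond those forced by the universal vertices) cannot increase the clique count — this is where one typically invokes a supersaturation or local-stability lemma (few $K_r$'s would already force a disjoint second $K_r$ after also using $Q$) and a convexity argument à la Zykov symmetrization to reduce the link graphs to Turán graphs. A secondary technical point is that "large enough $n$" must be chosen after fixing $r$ (and $k$) so that the additive gains from the $m$ universal vertices, which scale like $n^{k-1}$ or $n^{j-1}$, dominate lower-order discrepancies; making these thresholds explicit is routine but tedious, so I would state them implicitly. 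Finally I should double-check the edge case where $G$ itself is $K_r$-free in part (ii): then $\sum_{j=k}^{2r-1}\cN(K_j,G)=\sum_{j=k}^{r-1}\cN(K_j,G)\le\sum_{j=k}^{r-1}\cN(K_j,T(n,r-1))$, and one compares this with the construction's count — here the construction wins because concentrating $m$ vertices into a clique joined to a smaller Turán graph beats the balanced $(r-1)$-partite graph for counting cliques of size $\ge k$, which is itself a short convexity computation.
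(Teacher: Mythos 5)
Your part (ii) rests on a miscalculation that derails the whole argument. With $m=2k-2r+1$ and $p=2r-k-1$ you have $m+p=k$, not $r-1$: the extremal graph $K_m\vee T(n-m,p)$ has clique number exactly $k\ge r$, so it is \emph{not} $K_r$-free; it is $2K_r$-free because any copy of $K_r$ must use at least $k-r+1$ vertices of the $m$-clique, and two disjoint copies would need $2k-2r+2>m$ of them. Consequently your conclusion that ``the sum is $\sum_{j=k}^{r-1}\cN(K_j,\cdot)$'' is an empty sum (since $k\ge r$), and the comparison you sketch is vacuous. Moreover, your plan of ``optimizing over how the $r$ vertices of $Q$ are distributed as universal vertices'' cannot even reach the correct extremal structure, since the number of universal vertices needed, $2k-2r+1$, exceeds $r$ for $k$ close to $2r$. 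The paper's actual mechanism is different and essential: in a $2K_r$-free graph any two $k$-cliques ($r\le k<2r$) intersect in at least $2k-2r+1$ vertices, so the hypergraph of $k$-cliques is $(2k-2r+1)$-intersecting; Frankl's theorem then forces either all $k$-cliques through a common $(2k-2r+1)$-set (after which one analyzes $G[U]$ via $K_{2r-k}$-freeness and Zykov's theorem) or only $o(n^{2r-k-1})$ of them, and the same intersecting-family bound shows cliques of size $>k$ contribute only $O(n^{2r-k-2})$. Nothing in your proposal supplies a substitute for this step.

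In part (i) the decomposition by trace on a peeled copy $Q$ of $K_r$, with each link bounded by a Turán count, only yields roughly $\cN(K_k,T(n,r-1))+r\,\cN(K_{k-1},T(n,r-1))$, which overshoots the target $\cN(K_k,K_1\vee T(n-1,r-1))\approx\cN(K_k,T(n,r-1))+\cN(K_{k-1},T(n,r-1))$ by $\Theta(n^{k-1})$. So ``summing and optimizing'' is not a proof even of the inequality: the entire difficulty is showing that at most one vertex can be ``rich,'' i.e.\ lie in $(1+o(1))\cN(K_{k-1},T(n,r-1))$ many $k$-cliques (note $K_2\vee T(n-2,r-1)$ already contains $2K_r$). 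The paper achieves this by taking a \emph{minimum} cover $U$ of the $K_r$'s (not an arbitrary peeled $Q$): if $|U|\ge 2$, minimality provides, for each $u\in U$, an $r$-clique avoiding $u$, and $2K_r$-freeness then forces every $K_{r-1}$ in $N(u)\setminus U$ to meet it, giving only $O(n^{r-2})$ such $K_{r-1}$'s; the removal lemma plus the Ma--Qiu stability theorem convert this into the per-vertex bound $(1+o(1))\cN(K_{k-1},T(n\tfrac{r-2}{r-1},r-2))$, which yields the contradiction. Your closing paragraph defers exactly this to an unspecified ``supersaturation or local-stability lemma,'' but that is the heart of the proof, not a routine technicality, so the proposal as written has a genuine gap in both parts.
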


Note that the above theorem determines $\ex(n,K_k,2K_r)$ for every pair of $s$ and $r$ if $n$ is large enough. The second statement gives a bit more: if we count the copies of larger cliques in addition to $K_k$, then we obtain the same bound. The extremal graph is the same for $\ex(n,\{K_k,K_{k+1},\dots,K_{2r-1}\},2K_r)$ and $\ex(n,K_k,2K_r)$, and it does not contain cliques of order more than $k$. 

\begin{thm}\label{genbrs}
For any $r\ge 3$, $1\le s\le r-1$, and $1\le t <r-s$, we have \[\ex(n,K_{r+t},B_{r,s})=\Omega(n^{r-s-t-1}).\]

For any $r\ge 3$, $2s+t+1<r$ and $n$ large enough, we have  \[\ex(n,K_{r+t},B_{r,s})=\Theta(n^{r-s-t-1}).\]

For any $t\ge 1$, $t+3<r$ and $n$ large enough, we have \[\ex(n,K_{r+t},B_{r,1})=\cN(K_{r+t},K_{2t+2}\vee T(n-2t-2,r-t-2)).\]
\end{thm}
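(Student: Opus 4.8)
The plan is to prove the lower and upper bounds separately; the lower bound is the displayed construction, and the upper bound is where the work lies.

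\emph{Lower bound.} Set $G_0:=K_{2t+2}\vee T(n-2t-2,r-t-2)$ and write $A$ for its $K_{2t+2}$ part and $B$ for its Turán part. Every $r$-clique of $G_0$ meets $B$ in a clique of $T(n-2t-2,r-t-2)$, hence in at most $r-t-2$ vertices, so it meets $A$ in at least $t+2$ vertices. Thus two $r$-cliques of $G_0$ meeting in exactly one vertex would together occupy at least $2(t+2)-1=2t+3>|A|$ vertices of $A$, which is impossible, so $G_0$ is $B_{r,1}$-free. By the same token every copy of $K_{r+t}$ in $G_0$ contains all of $A$, whence $\cN(K_{r+t},G_0)=\cN(K_{r-t-2},T(n-2t-2,r-t-2))$, the value in the statement.

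\emph{A structural lemma.} For the upper bound I would first record the following, valid in any $B_{r,1}$-free graph $G$: two distinct copies of $K_{r+t}$ are either vertex-disjoint or share at least $2t+2$ vertices. Note that $G$ is $B_{r,1}$-free iff no two $r$-cliques of $G$ meet in exactly one vertex (the union of two such $r$-cliques is a copy of $B_{r,1}$, and conversely). Suppose $C_1,C_2$ are $(r+t)$-cliques with $D:=C_1\cap C_2$ and $1\le |D|=m\le 2t+1$. Put $p:=\max(1,m-t)$; then $1\le p\le m$, $p\le r$, and $2p\le m+1$. Since $|C_i\setminus D|=r+t-m\ge r-p$, for any $p$-subset $P_i\subseteq D$ there is an $r$-clique $Q_i\subseteq C_i$ with $Q_i\cap D=P_i$. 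Choosing $P_1,P_2$ to meet in exactly one vertex and noting $Q_1\cap Q_2\subseteq D$ produces two $r$-cliques meeting in exactly one vertex, a contradiction.

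\emph{Extracting a common core.} Let $G$ be $B_{r,1}$-free on $n$ vertices; we may assume $\cN(K_{r+t},G)\ge\cN(K_{r+t},G_0)=\Theta(n^{r-t-2})$, since otherwise there is nothing to prove. By the lemma, two copies of $K_{r+t}$ that share a vertex share at least $2t+2$ of them, so the copies of $K_{r+t}$ fall into ``conflict components'' supported on pairwise disjoint vertex sets. Because $m\mapsto\cN(K_{r-t-2},T(m-2t-2,r-t-2))$ is strictly superadditive — here using $r-t-2\ge 2$, which is exactly the hypothesis $t+3<r$ — one reduces to a single component. Inside a single component a sunflower/$\Delta$-system argument, together with the fact that at most $O(n^{r-t-3})$ copies of $K_{r+t}$ can contain a fixed set of $2t+3$ vertices and the assumption $\cN(K_{r+t},G)=\Theta(n^{r-t-2})$, should force every copy of $K_{r+t}$ to contain one common $(2t+2)$-clique $A$. \textbf{This is the hard part}: a conflict component can be large without being pairwise intersecting (when $r$ is large relative to $t$), hence can a priori carry several incomparable $(2t+2)$-cores, and ruling that out — or showing such graphs are not extremal — with the precision needed for an \emph{exact} count (not merely the order of magnitude, which is the second assertion above) is the crux; the superadditivity bookkeeping (floors/ceilings, small components) and the equality analysis below are routine by comparison.

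\emph{Finishing, given the core.} Suppose every $K_{r+t}$ of $G$ contains the $(2t+2)$-clique $A$, and let $N(A)$ be its common neighbourhood. Then $G[N(A)]$ is $K_{r-t-1}$-free: a $K_{r-t-1}$ in it, say $S^{*}$, would make $A\cup S^{*}$ a $K_{r+t+1}$, and deleting any vertex of $A$ from it would give a copy of $K_{r+t}$ not containing $A$, a contradiction. Since every copy of $K_{r+t}$ in $G$ equals $A\cup S$ for an $(r-t-2)$-clique $S$ of $G[N(A)]$, Zykov's theorem (Turán graphs maximize the number of $K_q$'s among $K_{p+1}$-free graphs) gives
\[
\cN(K_{r+t},G)=\cN(K_{r-t-2},G[N(A)])\le\cN(K_{r-t-2},T(|N(A)|,r-t-2))\le\cN(K_{r-t-2},T(n-2t-2,r-t-2)),
\]
which is $\cN(K_{r+t},G_0)$. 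Equality throughout forces $|N(A)|=n-2t-2$ and $G[N(A)]=T(n-2t-2,r-t-2)$ by the uniqueness in Zykov's theorem, i.e.\ $G=G_0$.
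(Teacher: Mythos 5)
Your lower-bound verification for $s=1$ and your intersection lemma (in a $B_{r,1}$-free graph the family of $(r+t)$-cliques is $\{0,2t+2,2t+3,\dots,r+t-1\}$-intersecting) match the paper, and your finishing argument \emph{given} a common $(2t+2)$-clique core is sound. But the step you yourself flag as ``the hard part'' --- extracting that core --- is a genuine gap, and it is precisely where almost all of the paper's work lies. Two concrete problems with your sketch: (a) the reduction to a single ``conflict component'' via superadditivity is circular, because to discard the other components you need a bound of the shape $f(|M|)$ on the number of $(r+t)$-cliques supported on a component with vertex set $M$, which is essentially the statement being proved and cannot be supplied by induction since the result is claimed only for $n$ large (and $f$ does not bound the extremal number for small $m$); (b) within one component the cliques need not share any $(2t+2)$-set, and a sunflower argument alone does not force one. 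The paper avoids both difficulties by decomposing the clique hypergraph not into connected components but via the Frankl--F\"uredi structure theorem for such intersecting families (Lemma \ref{struc}): the family splits into ``stars'' $\cF_j$, each consisting of cliques containing a fixed $(2t+2)$-set $A_j$, with pairwise disjoint link shadows, plus an $O(n^{r-t-3})$ exceptional part. For a star the per-part bound $|\cF_j|\le f(|M_j|)$ comes for free from Proposition \ref{krr-1}, since every vertex of $M_j$ is joined to all of $A_j$ and hence $G[M_j\setminus A_j]$ is $(K_{r-t-1}+K_{r-t-2})$-free; then the Lov\'asz--Kruskal--Katona shadow bound, Proposition \ref{properties}, Ma--Qiu stability and the Andr\'asfai--Erd\H os--S\'os theorem are combined to show that the largest star covers all of $V(G)$ and that every $(r+t)$-clique contains $A_1$. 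None of this is replaceable by the bookkeeping you describe as routine; it is the crux you left open.

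Separately, the statement has three assertions and you address only the third. The general-$s$ lower bound requires the construction $K_{s+2t+1}\vee T(n-s-2t-1,r-s-t-1)$ and a $B_{r,s}$-freeness check, and the upper bound $O(n^{r-s-t-1})$ under $2s+t+1<r$ is obtained in the paper from the $\{0,\dots,s-1,2t+s+1,\dots,r+t-1\}$-intersecting property together with Theorem \ref{ff}(i); neither appears in your proposal. (Minor: your closing uniqueness claim ``$G=G_0$'' is not needed for the stated result, which asks only for the maximum count.)
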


Observe that the above theorem determines $\ex(n,K_k,B_{r,1})$ for every $k>r$ and $n$ large enough. Consider now the case $k<r$. For $s=0$, the value of $\ex(n,K_k,B_{r,s})$ is determined exactly for sufficiently large $n$ by Theorem \ref{2kr}. We have mentioned a result of Ma and Qiu \cite{MQ} earlier about graphs with a color-critical edge, which determines $\ex(n,K_k,B_{r,s})$ for sufficiently large $n$ if $k<r$ and $s\ge 2$. We now deal with the remaining case $s=1$. Let $T^+(n,r-1)$ denote the graph we obtain from $T(n,r-1)$ by adding an edge to a smallest part.

\begin{thm}\label{newn} If $k<r$ and $n$ is sufficiently large, then
$\ex(n,K_k,B_{r,1})=\cN(K_k,T^+(n,r-1))$.
\end{thm}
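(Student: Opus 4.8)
The plan is to establish the lower bound by construction and the upper bound by a stability-type argument. For the lower bound, observe that $T^+(n,r-1)$ is $B_{r,1}$-free: a copy of $B_{r,1}$ needs two $K_r$'s sharing exactly one vertex, hence $r$ independent... wait — more carefully, $T^+(n,r-1)$ has clique number $r$ only through edges in the modified part, and one checks directly (as in \cite{efgg,cgpw}) that no two $K_r$'s can overlap in exactly one vertex. Thus $\cN(K_k,T^+(n,r-1))$ is a valid lower bound for $\ex(n,K_k,B_{r,1})$.

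For the upper bound, let $G$ be an $n$-vertex $B_{r,1}$-free graph with $\cN(K_k,G)\ge \cN(K_k,T^+(n,r-1))$. First I would invoke the known edge bound $\ex(n,B_{r,1})=|E(T(n,r-1))|+1$ together with a supersaturation/stability argument to show that $G$ is ``close'' to $T(n,r-1)$: there is a partition of $V(G)$ into $r-1$ parts $V_1,\dots,V_{r-1}$ with all but $o(n^2)$ of the ``missing'' edges inside parts and all but $o(n^2)$ edges present between parts. The main work is then a cleaning/local-adjustment step: because $\cN(K_k,G)$ is at least as large as in $T^+(n,r-1)$, which is essentially the count in the Turán graph plus a lower-order term, one argues that $G$ cannot afford many ``defects'' relative to the complete $(r-1)$-partite graph on the $V_i$'s — each missing cross-edge or each vertex in an unexpected position costs $\Theta(n^{k-2})$ copies of $K_k$, while each extra edge inside a part gains at most $O(n^{k-2})$, and the $B_{r,1}$-freeness caps the total gain. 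Iterating this, $G$ is exactly $(r-1)$-partite except possibly for a bounded number of edges inside the parts.

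At that point the problem reduces to a finite optimization: given a complete $(r-1)$-partite graph with part sizes $n_1,\dots,n_{r-1}$ plus a graph $H_i$ of ``extra'' edges inside each $V_i$, maximize the number of $K_k$'s subject to $B_{r,1}$-freeness. The $B_{r,1}$-freeness forces each $H_i$ to be very restricted — essentially $H_i$ can contain at most one edge, and that edge cannot be ``extended'' by the cross-edges in a way that produces two overlapping $K_r$'s; I would show that the configurations that survive are exactly a single edge placed in one part, and then among those, placing it in a smallest part and balancing the remaining sizes maximizes $\cN(K_k,\cdot)$, by a standard convexity/exchange argument comparing $\cN(K_k,\cdot)$ across part-size vectors (moving a vertex from a larger to a smaller part increases the $K_k$ count when $k<r$, since $K_k$'s use at most $k\le r-1$ parts). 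This identifies the optimum as $T^+(n,r-1)$.

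The hard part will be the cleaning step: controlling the interaction between ``extra'' intra-part edges and the cross-edges, since an intra-part edge in $V_i$ together with one vertex from each of $k-2$ other parts yields a $K_k$, but together with one vertex from each of $r-2$ other parts it starts building a $K_r$, and two such near-$K_r$'s sharing the right vertex give a forbidden $B_{r,1}$; making this quantitative — showing a linear (not merely $o(n^2)$) bound on the number of defects and then pushing it down to a constant — is where the real effort lies. I expect this to follow the template of \cite{ger2} for color-critical-vertex graphs, adapted to the $H=K_k$ with $k<r$ regime.
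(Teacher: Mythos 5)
There is a genuine gap, in fact two. First, your stability step is not justified as stated: your hypothesis is a lower bound on the number of copies of $K_k$, not on the number of edges, so you cannot start from $\ex(n,B_{r,1})=|E(T(n,r-1))|+1$ plus edge supersaturation --- a $B_{r,1}$-free graph maximizing $K_k$-counts is not a priori near-extremal in edges, and Kruskal--Katona does not convert ``many $K_k$'s'' into $(1-o(1))|E(T(n,r-1))|$ edges once $k\ge 3$. What is needed (and what the paper invokes) is the Ma--Qiu stability theorem for clique counts (Theorem \ref{mq}): an $n$-vertex $B_{r,1}$-free graph with $\cN(K_k,G)\ge \cN(K_k,T(n,r-1))-o(n^k)$ has edit distance $o(n^2)$ from $T(n,r-1)$. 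This is fixable by citing the right theorem, but as written your first step does not go through.

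Second, the ``cleaning'' step, which you explicitly defer, is the actual content of the proof, and the bookkeeping you sketch cannot work: a missing cross-edge costs $\Theta(n^{k-2})$ copies of $K_k$ and an extra intra-part edge gains $\Theta(n^{k-2})$, the same order, so no termwise comparison forces the number of defects down to a constant. In particular, your claim that $G$ must be exactly $(r-1)$-partite up to a bounded number of intra-part edges ignores the hardest configurations: a single vertex $u$ with $\Omega(n)$ neighbors inside its own part (which creates $\Theta(n^{k-1})$ extra $K_k$'s while destroying only $O(n^{k-1})$), and vertices of small degree, neither of which is controlled by edit-distance $o(n^2)$. The paper's proof handles exactly these cases using the structural fact that in a $B_{r,1}$-free graph the neighborhood of any vertex is $2K_{r-1}$-free (and the relevant common neighborhoods are $2K_{r-2}$-free), and then quantifies the resulting deficit via Theorem \ref{2kr}, i.e.\ via $\ex(m,K_{k-1},2K_{r-1})=(1+o(1))\cN(K_{k-1},T(m,r-2))$; low-degree vertices are deleted one by one and accounted for with the same bound, with a separate argument when more than $n/2$ vertices are deleted. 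Your proposal contains neither the $2K_{r-1}$-free-neighborhood observation nor any mechanism replacing it, so the reduction to the final finite optimization (which itself is fine, and matches the paper's balancing step) is not established.
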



In the case $r=3$, we can obtain a much more general result.

\begin{thm}\label{b31}
For any integers $a\le b$ and $n$ large enough, we have that $\ex(n,K_{a,b},B_{3,1})=\cN(K_{a,b},T)$ for some $n$-vertex graph $T$ that is obtained from a complete bipartite graph by adding an edge.
\end{thm}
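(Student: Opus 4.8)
The plan is to reduce the problem to a structural characterization of $B_{3,1}$-free graphs on $n$ vertices that contain many copies of $K_{a,b}$, and then optimize among the handful of candidate structures. Recall that $B_{3,1}$ is two triangles sharing a single vertex — equivalently, a vertex $v$ together with two disjoint edges in its neighborhood, i.e.\ a vertex whose neighborhood contains a matching of size $2$. So a graph is $B_{3,1}$-free precisely when, for every vertex $v$, the induced graph $G[N(v)]$ has no two independent edges, which by the classical description of graphs with matching number $1$ means $G[N(v)]$ is either a triangle or a star (possibly with isolated vertices attached). This very rigid local condition is the engine of the whole argument.

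First I would establish that an extremal graph $G$ must contain a large complete bipartite subgraph. Since $\cN(K_{a,b}, G)$ is large (it is $\Omega(n^{a+b})$ because already the Tur\'an-type construction $T^+(n,?)$ or a single complete bipartite graph with an extra edge achieves this order), a supersaturation / removal-type argument shows $G$ contains a copy of $K_{m,m}$ for $m$ growing with $n$. Better: I would first pin down that the number of edges is $(1+o(1))n^2/4$, i.e.\ $G$ is close to bipartite, using $\ex(n,B_{3,1}) = \ex(n, K_3) + 1 = \lfloor n^2/4\rfloor + 1$ from \cite{efgg}. Then a stability argument (Erd\H os--Simonovits stability for $K_3$, since $B_{3,1}\supset K_3$ forces near-bipartiteness once we also control the few triangles) gives that $G$ differs from a balanced complete bipartite graph in $o(n^2)$ edges. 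The key refinement is: the triangles of a $B_{3,1}$-free graph are edge-disjoint and in fact pairwise vertex-disjoint except they can share... actually no — if two triangles share an edge $xy$ and have third vertices $z,z'$, then $x$ sees the two independent edges $yz$ and... that's not independent. Let me be careful in the write-up; the correct statement is that $B_{3,1}$-free means no $K_4$ either and triangles form a restricted pattern.

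Next, having reduced to $G$ being a complete bipartite graph $K_{X,Y}$ plus a bounded (indeed $O(1)$) number of extra edges — here I use that each extra edge inside a part creates a triangle with every common neighbor on the other side, and the local star/triangle condition forbids more than a constant amount of this — I would directly compare $\cN(K_{a,b}, K_{X,Y} + E')$ over all choices of the partition sizes $|X|, |Y|$ and the small edge set $E'$. A routine (but careful) computation: adding a single edge $e = uu'$ inside $X$ gains copies of $K_{a,b}$ that use $e$; since $K_{a,b}$ with the extra edge inside one side still must be essentially bipartite, the gain is of lower order unless $e$ lets two page-vertices on the same side be adjacent, contributing $\Theta(n^{a+b-2})$ new copies, which is still dwarfed by the main term $\cN(K_{a,b}, K_{X,Y}) = \Theta(n^{a+b})$. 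So the optimal partition $|X| = \alpha n, |Y| = (1-\alpha)n$ is determined by maximizing $\binom{|X|}{a}\binom{|Y|}{b} + \binom{|X|}{b}\binom{|Y|}{a}$ (for $a<b$ the symmetrization and convexity arguments show the balanced split is optimal, or a specific ratio depending on $a,b$), and then among graphs achieving the optimal partition, exactly one extra edge is added (adding it cannot hurt since $K_{X,Y}$ alone is $B_{3,1}$-free and has fewer copies, and it must be placed in the larger part / a part of a specific size to stay $B_{3,1}$-free while being realizable). This yields $T$ as claimed.

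**The main obstacle** I anticipate is the passage from "approximately complete bipartite plus $o(n^2)$ edges" to "exactly complete bipartite plus $O(1)$ edges" — the standard stability-to-exact bootstrapping. One must show that no vertex can have more than a constant number of neighbors on its own side, and that the bipartition is essentially unique; both follow from the local structure ($G[N(v)]$ is a star or triangle) combined with a counting argument showing that any vertex deviating from the bipartite pattern destroys $\Omega(n^{a+b-1})$ copies of $K_{a,b}$ while gaining at most $O(n^{a+b-2})$, hence cannot occur in an extremal graph. Getting the extra-edge bookkeeping exactly right — precisely which part the extra edge lands in, and confirming $\cN(K_{a,b}, T)$ with that edge strictly exceeds all competitors for large $n$ — is where the genuine case analysis lives, distinguishing $a = b$ from $a < b$ and tracking whether the copies of $K_{a,b}$ are allowed to use the extra edge. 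I would handle this by writing $\cN(K_{a,b}, K_{X,Y} + e)$ as a polynomial in $|X|$ and isolating the coefficient gained from $e$, then invoking a short convexity lemma to locate the maximizing partition.
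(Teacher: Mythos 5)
Your local observation is right ($G$ is $B_{3,1}$-free iff every neighbourhood induces a graph with no two independent edges, i.e.\ a star or a triangle plus isolated vertices), and your final optimization step is consistent with the statement. But the engine of your argument --- the passage to ``$G$ differs from a \emph{balanced} complete bipartite graph in $o(n^2)$ edges'' --- has a genuine gap, and as stated it is even false. The graph $T$ in the theorem is \emph{not} in general close to $T(n,2)$: for unbalanced $K_{a,b}$ (already for stars $K_{1,b}$ with $b\ge 4$, where one maximizes roughly $\alpha(1-\alpha)^b+\alpha^b(1-\alpha)$) the optimal complete bipartite host has parts of ratio far from $1:1$, so an extremal graph need not have $(1+o(1))n^2/4$ edges, and the edge-stability route via $\ex(n,B_{3,1})=\lfloor n^2/4\rfloor+1$ says nothing about it. More fundamentally, there is no off-the-shelf stability theorem you can invoke here: Theorem~\ref{mq} (Ma--Qiu) concerns counting cliques $K_m$ in $H$-free graphs with $\chi(H)>m$, not counting a bipartite graph $K_{a,b}$, and the target structure is not the Tur\'an graph, so ``a supersaturation / removal-type argument shows\dots'' and ``a stability argument\dots gives\dots'' are precisely the statements that would have to be proved, not quoted. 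Your own write-up signals the second missing piece: the structural description of how triangles sit in a $B_{3,1}$-free graph (books on disjoint edges, $K_4$'s, and how they attach to the rest) is started and then abandoned (``actually no --- \dots Let me be careful''), yet it is exactly this description that is needed to upgrade any approximate structure to ``complete bipartite plus $O(1)$ edges'' and to rule out configurations such as many vertex-disjoint $K_4$'s or triangles, which your local condition alone does not exclude.

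For comparison, the paper does not use stability at all: it runs a restricted Zykov symmetrization (following Liu--Wang), symmetrizing only vertices that are not rootlet vertices of any $B_{3,2}$, with a termination argument based on the page-counts of the books, and shows the resulting graph is a complete multipartite graph on at most $3$ parts together with a bounded structure of $K_4$'s and at most two special edges; the theorem then follows from a direct comparison of $K_{a,b}$-counts in the few resulting configurations. If you want to keep your stability-style plan, the honest version would require (i) proving an approximate-structure lemma for $\ex(n,K_{a,b},K_3)$-type counts tailored to the correct (possibly unbalanced) bipartite optimum, and (ii) a cleaning step driven by the star/triangle neighbourhood condition; both are substantial and currently absent.
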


For given $a$ and $b$, a straightforward optimization shows what $T$ is. In the case $K_{a,b}$ is not a star, i.e., $a,b\ge 2$, the extra edge of $T$ cannot be in any copy of $K_{a,b}$, thus a complete bipartite graph $K_{m,n-m}$ is also an extremal graph.

\section{Forbidding $2K_r$ and counting cliques}





In this section, we prove Theorem \ref{2kr}. First we gather some results that we will use in the proof. A family $\cF$ of sets is $t$-intersecting if for any $F,F'\in\cF$ we have $|F\cap F'|\ge t$. For a set $X$ we denote by $\binom{X}{k}$ the family of all $k$-subsets of $X$. The set $\{1,2,\dots,n\}$ of the first $n$ positive integers is denoted by $[n]$, and we write $[a,b]$ for the interval $\{s\in \mathbb{N}: a\le s\le b\}$.

We will use the following theorem of Frankl.

\begin{thm}[Frankl \cite{F}]\label{fra}
Let $\cF\subseteq \binom{[n]}{k}$ be $t$-intersecting with $|\cap_{F\in \cF}F|<t$. If $n$ is large enough, then $|\cF|\le \max\{|\cF_1|,|\cF_2|\}$, where
$$\cF_1=\left\{F\in\binom{[n]}{k}:[t]\subset F, F\cap [t+1,k+1]\neq \emptyset \right\}\cup \binom{[k+1]}{k}$$
and
$$\cF_2=\left\{F\in\binom{[n]}{k}:|F\cap [t+2]|\ge t+1\right\}.$$
\end{thm}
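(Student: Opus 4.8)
The plan is to use the $\Delta$-system (sunflower) method of Frankl, which is well suited to a statement that only needs $n$ large. As a warm-up that also fixes the target order of magnitude, I would first show $|\cF|=O_k(n^{k-t-1})$ by a short induction on the intersection number: if $\cF$ is not $(t+1)$-intersecting, pick $F_0,F_1\in\cF$ with $|F_0\cap F_1|=t$; every $F\in\cF$ contains some $t$-subset of $F_0$ and some $t$-subset of $F_1$, and unless these two $t$-sets coincide---which forces both to equal $T_0:=F_0\cap F_1$---the set $F$ lies in a star over a $(t+1)$-set, and there are only $O_k(n^{k-t-1})$ such $F$. Moreover $\{F\in\cF:T_0\subseteq F\}$ is also $O_k(n^{k-t-1})$, since $|\bigcap\cF|<t$ yields some $A\in\cF$ with $T_0\not\subseteq A$, and each such $F$ must meet the set $A\setminus T_0$ of size at most $k$. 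If $\cF$ is already $(t+1)$-intersecting one repeats with a larger intersection number, the estimates only improving. This already explains why the hypothesis $|\bigcap\cF|<t$ is exactly what separates $\cF$ from the trivial star $\{F:[t]\subseteq F\}$.

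For the exact bound I would pass to the $\Delta$-system proper. Fix a large constant $m=m(k)$. By the Erd\H os--Rado sunflower lemma, the set $\cF_{\mathrm{exc}}$ of members of $\cF$ containing no $m$-kernel---where an $m$-kernel is the common intersection of $m$ distinct members of $\cF$---has size $O_k(1)$, and the family $\cK$ of all $m$-kernels is itself $t$-intersecting, with every $Y\in\cK$ of size $t\le|Y|\le k-1$ (the lower bound because $Y$ is a common intersection of pairwise $t$-intersecting sets; the $t$-intersectingness of $\cK$ by a routine argument picking petals that avoid one another, which needs $m\gtrsim k^2$). Writing $\cK^{*}$ for the inclusion-minimal members of $\cK$, we obtain $\cF\setminus\cF_{\mathrm{exc}}\subseteq\bigcup_{Y\in\cK^{*}}\{F:Y\subseteq F\}$, so the problem is now governed by the small family $\cK^{*}$.

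The dichotomy between $\cF_1$ and $\cF_2$ comes from the minimum kernel size $\tau=\min_{Y\in\cK}|Y|$. If $\tau=t$, then $t$-intersectingness forces a unique minimal kernel, which we relabel $[t]$, contained in every member of $\cK$; hence every $F\in\cF\setminus\cF_{\mathrm{exc}}$ contains $[t]$. Picking $A\in\cF$ with $[t]\not\subseteq A$ and running a Hilton--Milner-type argument---the worst case being that every such $A$ has $|A\cap[t]|=t-1$ and all of them share a single ``page'' $A\setminus[t]$ of size $k-t+1$---gives $|\cF|\le|\cF_1|$. If $\tau\ge t+1$, the size-$(t+1)$ members of $\cK^{*}$ form a $t$-intersecting $(t+1)$-uniform family, hence, by the elementary classification generalizing ``a graph in which every two edges meet is a star or a triangle'', either all contain a common $t$-set $W$, or at least three of them are the $(t+1)$-subsets of a common $(t+2)$-set $S$. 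In the ``star'' case one either reduces to the situation $\tau=t$ (when $W$ in fact lies in every kernel) or shows that only $O_k(1)$ kernels are relevant, finishing by the same Hilton--Milner bound against $\cF_1$; in the ``triangle'' case one checks that every larger kernel meets $S$ in at least $t+1$ points, so $\cF$ is contained, up to $\cF_{\mathrm{exc}}$, in $\{F:|F\cap S|\ge t+1\}$, which after relabelling $S=[t+2]$ is $\cF_2$.

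The main obstacle is getting the bound exactly, not merely up to the right order of magnitude. The sunflower lemma only controls $\cF_{\mathrm{exc}}$ by an additive constant, whereas the extremal families carry prescribed lower-order terms (the set $\binom{[k+1]}{k}$ added to $\cF_1$, and the sets with $|F\cap[t+2]|=t+2$ in $\cF_2$), so one has to argue that the members of $\cF$ not captured by the core are both few and ``of a different type'' from the relevant extremal family, and absorb them into this slack---a delicate but elementary piece of bookkeeping that is also where the precise threshold on $n$ (and on $m$) gets pinned down. A secondary subtlety is the Hilton--Milner step for $t$-intersecting rather than merely intersecting families: one must rule out configurations in which several core-avoiding sets combine to beat $\cF_1$, which is done by showing that any such combination would already force $\cF$ into a lower-order configuration. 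Shifting offers an alternative route---one may assume $\cF$ left-compressed and reduce to an analysis on an initial segment of $[n]$---but the same exact-constant accounting reappears in that language.
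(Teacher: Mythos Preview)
The paper does not prove this theorem at all: it is quoted verbatim from Frankl's 1978 paper \cite{F} and used as a black box in the proof of Theorem~\ref{2kr}. There is therefore no ``paper's own proof'' to compare your proposal against.

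That said, your outline follows the standard $\Delta$-system approach that Frankl himself used, and the overall architecture (sunflower kernels form a $t$-intersecting family; split on the minimum kernel size $\tau$; the $\tau=t$ case leads to a Hilton--Milner-type bound $|\cF_1|$, the $\tau\ge t+1$ case leads to the $(t+2)$-set configuration $|\cF_2|$) is correct in spirit. You are also honest that the hard part is upgrading the order-of-magnitude bound to the exact constant, and that this is where the real bookkeeping lives. One genuine gap worth flagging: in the $\tau\ge t+1$ ``star'' sub-case you write that one ``either reduces to $\tau=t$ \ldots\ or shows that only $O_k(1)$ kernels are relevant,'' but you do not explain why the latter alternative still yields $|\cF|\le|\cF_1|$ rather than merely $|\cF|\le|\cF_1|+O_k(1)$; absorbing that additive constant requires an argument (e.g., that the presence of any kernel not containing the common $t$-set $W$ forces enough sets to be excluded from the main star), and this is exactly the place where Frankl's original proof does nontrivial work.
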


Let us remark that later Ahlswede and Khachatrian \cite{AK} strengthened this result by determining the maximum size of a $t$-intersecting family $\cF\subset \binom{[n]}{k}$ with $|\cap_{F\in \cF}F|<t$ for any $n,k$, and $t$. 

\vskip 0.3truecm

Another tool in the proof will be the following generalization of the Erd\H os-Simonovits stability theorem \cite{E,S}. We say that two graphs $G$ and $G'$ have \textit{edit distance} at most $x$ if we can obtain $G'$ from $G$ by adding and deleting $x$ edges

\begin{thm}[Ma, Qiu \cite{MQ}]\label{mq}
Let $H$ be a graph with $\chi(H) = r + 1 > m \ge 2$. If $G$ is an $n$-vertex $H$-free
graph with $\cN (K_m,G) \ge \cN (K_m,T(n,r)) - o(n^m)$, then $G$ and $T(n,r)$ have edit distance $o(n^2)$.
\end{thm}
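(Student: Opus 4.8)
Since $\chi(H)=r+1>m$, Zykov's theorem guarantees $\cN(K_m,\cdot)\le\cN(K_m,T(n,r))$ for $K_{r+1}$-free graphs, so the hypothesis says that $G$ is near-extremal and we must recover its structure. The plan is to prove the statement in two steps: a removal-lemma reduction that replaces $G$ by a genuinely $K_{r+1}$-free graph with essentially the same number of copies of $K_m$, and then the core statement that a $K_{r+1}$-free graph $G'$ with $\cN(K_m,G')=\cN(K_m,T(n,r))-o(n^m)$ has edit distance $o(n^2)$ from $T(n,r)$. This core statement is a clique-count analogue of the Erd\H os--Simonovits stability theorem, and I would prove it by induction on $m$, the base case $m=2$ being exactly the classical stability theorem quoted in \cite{E,S}.

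\emph{The reduction.} As $\chi(H)=r+1$, $H$ is a subgraph of the complete $(r+1)$-partite graph all of whose parts have size $|V(H)|$; hence, by supersaturation, there is $\delta>0$ such that every $n$-vertex graph with at least $\delta n^{r+1}$ copies of $K_{r+1}$ contains $H$. An $H$-free $G$ therefore has only $o(n^{r+1})$ copies of $K_{r+1}$, so by the removal lemma we may delete $o(n^2)$ edges to obtain a $K_{r+1}$-free graph $G'$. Since each edge lies in $O(n^{m-2})$ copies of $K_m$, we get $\cN(K_m,G')\ge\cN(K_m,G)-o(n^m)\ge\cN(K_m,T(n,r))-o(n^m)$, while $\cN(K_m,G')\le\cN(K_m,T(n,r))$ by Zykov; and $G$ and $G'$ differ in $o(n^2)$ edges. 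So it suffices to prove the clique-stability statement for the $K_{r+1}$-free graph $G'$.

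\emph{The inductive step.} Let $m\ge3$ and write $q_v$ for the number of copies of $K_m$ through $v$, so that $\sum_v q_v=m\,\cN(K_m,G')$; since $G'[N(v)]$ is $K_r$-free and contains $q_v$ copies of $K_{m-1}$, Zykov gives $q_v\le\cN(K_{m-1},T(d(v),r-1))$. I would then chain four estimates: (1) $m\,\cN(K_m,G')=\sum_v q_v\le\sum_v\cN(K_{m-1},T(d(v),r-1))$; (2) by Maclaurin's inequality on the part sizes of $T(d(v),r-1)$, $\cN(K_{m-1},T(d(v),r-1))\le(\lambda+o(1))\binom{d(v)}{m-1}$ for the vertices with $d(v)=\Theta(n)$, with loss factor $\lambda=(r-2)\cdots(r-m+1)/(r-1)^{m-2}<1$ (the remaining vertices contributing $o(n^m)$ in total), so summing gives $\sum_v\cN(K_{m-1},T(d(v),r-1))\le(\lambda+o(1))\,\cN(K_{1,m-1},G')$; (3) since $K_{1,m-1}$ is bipartite, $T(n,r)$ asymptotically maximises the number of its copies among $K_{r+1}$-free graphs, so $\cN(K_{1,m-1},G')\le\cN(K_{1,m-1},T(n,r))+o(n^m)$; and (4) a direct computation gives $\lambda\,\cN(K_{1,m-1},T(n,r))=m\,\cN(K_m,T(n,r))+o(n^m)$. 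Hence the whole chain is tight up to $o(n^m)$; in particular $\sum_v\big(\cN(K_{m-1},T(d(v),r-1))-q_v\big)=o(n^m)$, so all but $o(n)$ vertices $v$ are \emph{good}, meaning $q_v=\cN(K_{m-1},T(d(v),r-1))-o(n^{m-1})$, and for those the induction hypothesis forces the link $G'[N(v)]$ to have edit distance $o(n^2)$ from $T(d(v),r-1)$.

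\emph{From links to a global partition, and the obstacle.} For a good vertex $v$ the link is an almost balanced, almost complete $(r-1)$-partite graph, from which one deduces that any two adjacent good vertices $u,v$ satisfy $d(u)+d(v)\le(2-\tfrac2r)n+o(n)$; feeding this back, together with Tur\'an's bound $e(G')\le e(T(n,r))$ and the near-maximality of $\cN(K_{1,m-1},G')$ established above, one should obtain $d(v)=(1-\tfrac1r)n+o(n)$ for all but $o(n)$ vertices. A standard patching then finishes: fix one good vertex $v_0$, colour $N(v_0)$ by the $(r-1)$ parts of its link, place $V(G')\setminus N(v_0)$ in an $r$-th class, and double count edges against the other good vertices to see that all but $o(n^2)$ of them respect this balanced $r$-partition, i.e.\ $G'$, and hence $G$, is within $o(n^2)$ of $T(n,r)$. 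The step I expect to be the real obstacle is precisely this passage from links to a global partition: extracting honest degree concentration (itself a stability assertion, since the quantities involved are near-extremal but not literally pinned down by the inequalities above) and then verifying consistency of the local $(r-1)$-partitions uniformly over the $1-o(1)$ fraction of good vertices. A softer alternative that avoids both the induction and the patching is a compactness argument: were the statement false, a convergent subsequence of counterexamples would produce a graphon $W$ with $t(K_{r+1},W)=0$ (by supersaturation, since $t(H,W)=0$) and $t(K_m,W)$ equal to its value on the balanced complete $r$-partite graphon $W_T$, yet with $W\ne W_T$ in cut distance, contradicting the fact that $W_T$ is the unique $K_{r+1}$-free graphon maximising $t(K_m,\cdot)$, which itself follows from the graphon form of Zykov symmetrisation together with the strict Maclaurin inequality for the part sizes.
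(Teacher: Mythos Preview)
The paper does not prove this theorem: it is quoted verbatim from Ma and Qiu \cite{MQ} as a tool (a generalization of the Erd\H os--Simonovits stability theorem) and used as a black box in the proofs of Theorems~\ref{2kr}, \ref{newn}, and Proposition~\ref{krr-1}. So there is no ``paper's own proof'' to compare your attempt against.

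As for your proposal on its own merits: the removal-lemma reduction to the $K_{r+1}$-free case is clean and correct. In the inductive chain, steps (1), (2), and (4) are fine, but step (3) is not justified by the reason you give. The claim that $\cN(K_{1,m-1},G')\le\cN(K_{1,m-1},T(n,r))+o(n^m)$ for $K_{r+1}$-free $G'$ is equivalent to an asymptotic bound on $\sum_v d(v)^{m-1}$ in $K_{r+1}$-free graphs; this does not follow merely from ``$K_{1,m-1}$ is bipartite'' (bipartite $H$ does \emph{not} in general imply that $T(n,r)$ asymptotically maximises $\cN(H,\cdot)$ over $K_{r+1}$-free graphs). It is, however, a known result on degree powers in $K_{r+1}$-free graphs (due to Bollob\'as--Nikiforov / Nikiforov), so the chain can be salvaged with the correct citation. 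You correctly identify the passage from near-extremal links to a global $r$-partition as the real work; your sketch there is plausible but would need to be made honest. The graphon alternative is sound in outline and is indeed how one would most cleanly establish the statement, provided one proves uniqueness of the $K_{r+1}$-free maximiser of $t(K_m,\cdot)$ in cut distance; Zykov symmetrisation at the graphon level does this.
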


We will also need the following two well-known results.

\begin{thm}[Zykov \cite{zykov}]\label{sym}
For any $2\le s< r\le n$ we have 
\[\ex(n,K_s,K_r)=\cN(K_s,T(n,r-1).\] 
\end{thm}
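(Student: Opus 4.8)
The plan is to reduce the extremal problem to an optimisation over complete multipartite graphs by Zykov's symmetrisation, and then to carry out that optimisation by a smoothing argument on elementary symmetric polynomials.

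First I would fix $n$ and let $G$ be a $K_r$-free graph on $n$ vertices maximising $\cN(K_s,G)$. For $v\in V(G)$ write $c(v)$ for the number of copies of $K_{s-1}$ in $G[N(v)]$, equivalently the number of copies of $K_s$ of $G$ through $v$. Given non-adjacent $u,v$, let $G_{u\to v}$ be obtained by deleting every edge at $v$ and then joining $v$ to exactly $N_G(u)$ (``cloning $u$ onto $v$''). Two observations carry the step: (a) $G_{u\to v}$ is still $K_r$-free, since any new copy of $K_r$ must use $v$, and replacing $v$ by $u$ in it --- legitimate, as the other $r-1$ vertices lie in $N_G(u)$ while $u\notin N_G(u)$ --- would give a $K_r$ in $G$; and (b) since $u$, $v$ and the clone of $u$ are pairwise non-adjacent, $\cN(K_s,G_{u\to v})=\cN(K_s,G)-c(v)+c(u)$. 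By maximality, $c(u)=c(v)$ whenever $uv\notin E(G)$, so $c$ is constant on each class of the non-adjacency relation. If that relation is not transitive, pick $u,v,w$ with $uv,vw\notin E(G)$, $uw\in E(G)$ and $c(u)=c(v)=c(w)$; cloning $v$ onto both $u$ and $w$ preserves $K_r$-freeness and changes $\cN(K_s,\cdot)$ by exactly the number of copies of $K_s$ of $G$ containing both $u$ and $w$, which is $\ge 0$. Iterating these moves --- with a secondary monovariant (the number of edges, say, or of twin pairs) to rule out looping once the $K_s$-count has stabilised --- yields an extremal graph in which non-adjacency is transitive, i.e.\ a complete multipartite graph.

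It remains to maximise $\cN(K_s,\cdot)$ over $n$-vertex $K_r$-free complete multipartite graphs. Such a graph has at most $r-1$ parts, so write $G=K_{n_1,\dots,n_{r-1}}$ with $n_i\ge 0$ and $\sum_i n_i=n$ (empty parts allowed). A copy of $K_s$ picks $s$ of the parts and one vertex in each, so $\cN(K_s,G)=e_s(n_1,\dots,n_{r-1})$, the $s$-th elementary symmetric polynomial. For any two indices $i,j$, $e_s(n_1,\dots,n_{r-1})=n_in_j\cdot e_{s-2}+(n_i+n_j)\cdot e_{s-1}+e_s$, where the symmetric functions on the right are in the other $r-3$ variables. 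Shifting one vertex between parts $i$ and $j$ leaves $n_i+n_j$ and those three functions untouched, so it only affects the term $n_in_j\,e_{s-2}$ with $e_{s-2}\ge 0$; since $n_in_j$ is largest (for a fixed sum) when $|n_i-n_j|\le 1$, moving a vertex from a larger part to a smaller one never decreases $\cN(K_s,\cdot)$. Hence the balanced partition $G=T(n,r-1)$ is optimal, which together with the previous paragraph and the fact that $T(n,r-1)$ is itself $K_r$-free gives the claimed equality.

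The genuinely delicate point is the termination of the symmetrisation: the strict-increase part is painless because $\cN(K_s,\cdot)$ is a bounded integer monovariant, so the work lies entirely on the ``plateau'' where the count no longer changes, and there one must track an auxiliary quantity to force the non-adjacency relation toward transitivity. The remaining ingredients --- that clones keep the graph $K_r$-free and that $e_s$ is maximised by the balanced vector --- are routine.
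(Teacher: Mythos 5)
Your overall route --- Zykov symmetrization to reduce to complete multipartite graphs, then balancing an elementary symmetric polynomial --- is exactly the method the paper attributes to Zykov and sketches at the start of Section 5, and your individual computations are correct: cloning a non-neighbour preserves $K_r$-freeness, the count changes by $c(u)-c(v)$ for a single clone and by the number of copies of $K_s$ through both $u$ and $w$ for the double clone, and $e_s(n_1,\dots,n_{r-1})$ is maximized by the balanced partition. The step that is not actually established is the one you yourself single out: termination of the iteration on the plateau. Neither of your proposed secondary monovariants works as stated. In the double clone of $v$ onto $u$ and $w$ the number of edges changes by $2d(v)-d(u)-d(w)+1$, and on the plateau you only know $c(u)=c(v)=c(w)$, which gives no control over degrees, so the edge count can move either way. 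Likewise, although $u,v,w$ become mutual twins, $u$ and $w$ may each lose arbitrarily many former twins (their neighbourhoods change), and two former twins of $u$ can be separated according to whether they lie in $N(v)$, so the number of twin pairs need not increase either. Without a genuine monovariant the process could cycle, and the reduction to complete multipartite graphs is not proved.

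The gap is fixable by structuring the symmetrization instead of iterating blindly: pick a vertex $v_1$ maximizing $c(v_1)$ and symmetrize every non-neighbour of $v_1$ onto $v_1$ one at a time. During this phase $c(v_1)$ is unchanged and the $c$-values of the remaining non-neighbours can only decrease, so every step keeps the $K_s$-count from dropping; afterwards the clones of $v_1$ form an independent set completely joined to $N_G(v_1)$. Now recurse inside $G[N_G(v_1)]$, which is $K_{r-1}$-free; this terminates after at most $r-1$ rounds and yields a complete multipartite $K_r$-free graph with at least as many copies of $K_s$, after which your balancing argument finishes the proof. (Note that the paper does not prove this theorem itself --- it cites Zykov --- and when it describes the symmetrization method it explicitly remarks that one also needs to show the process terminates, which is precisely the point your write-up leaves open.)
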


\begin{thm}[Removal lemma, Erd\H os, Frankl, Rödl \cite{efr}]\label{rem}
For any graph $H$ on $h$ vertices and $\varepsilon>0$ there exists $\delta=\delta(\varepsilon,H)$ such that if a graph $G=(V,E)$ on $n$ vertices contains at most $\delta n^h$ copies of $H$, then there exists $E'\subset E$ with $|E'|\le \varepsilon n^2$ such that $(V,E\setminus E')$ is $H$-free.
\end{thm}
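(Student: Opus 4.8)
The lower bound is a construction: let $T=K_{m,n-m}+xy$, where $xy$ is an edge placed inside the part of size $\min(m,n-m)$ and $m$ is chosen — a finite optimization for each $n$, depending on $a,b$ — to maximize $\cN(K_{a,b},T)$. Since $xy$ is the unique edge of $T$ lying inside a part, every triangle of $T$ contains $xy$, so any two triangles of $T$ share two vertices; hence $T$ is $B_{3,1}$-free and $\ex(n,K_{a,b},B_{3,1})\ge\cN(K_{a,b},T)$. For the upper bound I would fix an extremal $B_{3,1}$-free graph $G$ on $n$ vertices and first describe its triangles. Calling two triangles equivalent when they share at least two vertices, a short case analysis using $B_{3,1}$-freeness shows that two triangles meet in $0$ or $\ge 2$ vertices and that this relation is transitive, hence an equivalence relation. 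Each class consists of triples pairwise meeting in two points, so it spans either a book $K_2\vee\overline{K_t}$ (all its triangles sharing a common spine edge) or a $K_4$; distinct classes are vertex-disjoint and no triangle meets a foreign class. Consequently the set $E_{\triangle}$ of edges of $G$ lying in a triangle is the disjoint union of the edge sets of the classes, and since a book on $t+2$ vertices has $2t+1$ edges and $K_4$ has $6$, we get $|E_{\triangle}|\le 2n$; in particular $G-E_{\triangle}$ is triangle-free.

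Since every edge lies in at most $\binom{n-2}{a+b-2}$ copies of $K_{a,b}$, we have $\cN(K_{a,b},G)\le \cN(K_{a,b},G-E_{\triangle})+|E_{\triangle}|\binom{n-2}{a+b-2}=\cN(K_{a,b},G-E_{\triangle})+O(n^{a+b-1})$. I would combine this with $\cN(K_{a,b},G)\ge\cN(K_{a,b},T)$ and with the triangle-free bound $\ex(n,K_{a,b},K_3)=\cN(K_{a,b},K_{m^*,n-m^*})$ for the optimal balance $m^*$ and $n$ large, which follows from a Zykov-type symmetrization (cloning a vertex in a triangle-free graph keeps it triangle-free and does not decrease $\cN(K_{a,b},\cdot)$, so an extremal triangle-free graph may be taken complete multipartite, hence complete bipartite; compare Theorem~\ref{sym}). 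This shows that $G-E_{\triangle}$ is a triangle-free graph with $(1-o(1))$ of the maximum possible number of copies of $K_{a,b}$, and the stability version of that statement produces a partition $V(G)=X\sqcup Y$ with $\bigl||X|-m^*\bigr|=o(n)$ in which $G$ differs from the complete bipartite graph $K_{X,Y}$ in only $o(n^2)$ edges.

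The last and hardest step is to make this exact by a progressive local analysis. A vertex of $X$ missing $\Omega(n)$ of its neighbours in $Y$, or an excess of edges inside $X$ or inside $Y$, would push $\cN(K_{a,b},G)$ strictly below $\cN(K_{a,b},T)$; here one uses that in a $B_{3,1}$-free graph close to $K_{X,Y}$ a vertex has only $O(1)$ neighbours inside its own part (otherwise two of the triangles through it would meet in a single vertex), so the entire ``gain'' coming from intra-part edges is only $O(n^{a+b-1})$. Iterating this forces $G\supseteq K_{X,Y}$ with $O(1)$ intra-part edges. Once $G\supseteq K_{X,Y}$, however, each intra-part edge is the spine of a book whose pages are all of the opposite part, and $B_{3,1}$-freeness then rules out two spines inside the same part (their page sets coincide) as well as one spine inside $X$ together with one inside $Y$ (a page of the first together with the second spine yields two triangles meeting in a single vertex). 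Hence $G\subseteq K_{X,Y}+e$ for a single edge $e$, and since $T$ is by definition the best graph of exactly this shape (over all part sizes), $\cN(K_{a,b},G)\le\cN(K_{a,b},T)$; this last comparison is carried out separately for $a=1$, where the extra edge strictly increases the count and is best placed in the smaller part, and for $a\ge 2$, where it produces no copy of $K_{a,b}$ at all, so that a plain complete bipartite graph is extremal as well.

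The main obstacle is precisely this last step: the reduction to the triangle problem only controls the leading term, so to obtain equality rather than an asymptotic one must run the local refinement while carefully tracking the $O(n^{a+b-1})$-sized corrections, and must separate the genuinely different behaviour of stars ($a=1$) from $a\ge2$. A secondary technical point is supplying the triangle-free input $\ex(n,K_{a,b},K_3)=\cN(K_{a,b},K_{m^*,n-m^*})$ together with its stability form, for which the symmetrization argument requires the usual care (taking a maximizer with the largest number of edges) to exclude non-complete-multipartite maximizers.
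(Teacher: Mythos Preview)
Your proposal does not address the stated theorem at all. The statement you were asked to prove is the \emph{Removal Lemma} of Erd\H{o}s, Frankl and R\"odl (Theorem~\ref{rem}): for any graph $H$ and $\varepsilon>0$ there is $\delta>0$ so that an $n$-vertex graph with at most $\delta n^{|V(H)|}$ copies of $H$ can be made $H$-free by deleting $\varepsilon n^2$ edges. The paper does not prove this; it is quoted as a known tool with a citation, and it is later applied inside the proof of Theorem~\ref{2kr}. What you wrote is instead a sketch of an approach to Theorem~\ref{b31}, the determination of $\ex(n,K_{a,b},B_{3,1})$. Nothing in your text---the construction $T=K_{m,n-m}+xy$, the classification of triangle classes in a $B_{3,1}$-free graph, the reduction to $\ex(n,K_{a,b},K_3)$, the stability/cleanup step---has any bearing on the Removal Lemma.

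If the intended target was indeed the Removal Lemma, then the proposal is simply off-topic and you would need to start over (and in any case the paper does not supply a proof to compare against). If the target was Theorem~\ref{b31} and the labeling here is a clerical error, note that the paper proceeds quite differently: it uses a restricted Zykov-type symmetrization (symmetrizing only vertices that are not rootlet vertices of a $B_{3,2}$) to reduce an extremal graph to one whose non-rootlet part is complete multipartite with at most three parts, and then analyzes the remaining structure (books and $K_4$'s) directly, rather than passing through the triangle-free problem and a stability argument as you propose.
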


\vskip 0.3truecm

\begin{proof}[Proof of Theorem \ref{2kr}]
Let $G=(V,E)$ be a $2K_r$-free graph on $n$ vertices.

Suppose first that $k< r$. Let $t$ be the minimum number such that there exists a vertex subset $U\subset V$ of size $t$ such that $G[V\setminus U]$ is $K_r$-free. Observe that $t\le r$ as if $K$ is a copy of $K_r$, then $G[V\setminus V(K)]$ must be $K_r$-free since $G$ is $2K_r$-free. By Theorem \ref{sym}, we have $\cN(K_k,G[V\setminus U])\le \cN(K_k,T(n-t,r-1))$. If $t\le 1$, then this immediately yields the statement of (i). 

Assume $2\le t\le r$. Assume that $\cN(K_k,G)\ge \cN(K_k,T(n-t,r-1))$. Then the number of $k$-cliques intersecting $U$ is at least $\cN(K_k,K_1\vee T(n-1,r-1))-\cN(K_k,T(n-t,r-1))$. Observe that 

\begin{multline*}
\cN(K_k,K_1\vee T(n-1,r-1))-\cN(K_k,T(n-t,r-1))  \\
  =(1+o(1))((t-1)\cN(K_{k-1},T(n\frac{r-2}{r-1},r-2))+ \cN(K_{k-1},T(n-1,r-1)).
\end{multline*}

If $n$ is large enough, then \[\frac{\cN(K_{k-1},T(n-1,r-1))}{\cN(K_{k-1},T(n\frac{r-2}{r-1},r-2))}>c_r>1\] for some constant $c_r$ depending only on $r$. This means that the number of $k$-cliques meeting $U$ is at least $(t-1+c_r+o(1))\cN(K_{k-1},T(n\frac{r-2}{r-1},r-2))$. As the number of $k$-cliques containing at least two vertices of $U$ is $O(n^{k-2})$, we obtain a contradiction by proving that for any $u\in U$, the number of $k$-sets $S\subset V$ with $S\cap U=\{u\}$ and $G[S]=K_k$ is at most $(1+o(1))\cN(K_{k-1},T(n\frac{r-2}{r-1},r-2))$.

To this end, observe first that for any $u\in U$ the number of $(r-1)$-subsets $A$ of $V\setminus U$ such that $\{u\}\cup A$ form a clique in $G$ is $O(n^{r-2})$. Indeed, as $U$ is minimal, there exists an $r$-clique $A'$ with $u\notin A'$, thus every such $(r-1)$-subset $A$ with $A\ni u$ must meet $A'$ (as $G$ is $2K_r$-free), so their number is at most $r\binom{n-2}{r-2}$. By Theorem \ref{rem}, there exist a set $E_0$ of $o(n^2)$ edges whose deletion removes all these $K_{r-1}$s. The number of $k$-cliques containing $u$ and at least one edge from $E_0$ is $o(n^{k-1})$. 

As the number of $k$-cliques in $G$ meeting $U$ is $O(n^{k-1})$, we must have $\cN(K_k,G[V\setminus U])\ge\cN(K_k,T(n-t,r-1))-o(n^k)$. Theorem \ref{mq} implies that $V\setminus U$ can be partitioned into $r-1$ almost parts $V_1,V_2,\dots, V_{r-1}$ such that $G[V\setminus U]$ has edit distance $o(n^2)$ from the Turán-graph on the $V_i$s as partite sets and each $V_i$ has order either $\lfloor (n-t)/(r-1)\rfloor$ or $\lceil (n-t)/(r-1)\rceil$. 

We claim that there exists an $i$ such that $|N_G(u)\cap V_i|= o(n)$.
Indeed, otherwise we can pick $\Theta(n^{r-1})$ $(r-1)$-sets having exactly one element in each $N_G(u)\cap V_i$. Only $o(n^{r-1})$ of these $(r-1)$-sets contain a pair of vertices $v,v'$ such that $vv'$ is not an edge of $G$, since only $o(n^2)$ edges between the sets $V_i$ ($i\le r-1$) are missing from $G$. Therefore, there are
$\Theta(n^{r-1})$ copies of $K_{r-1}$s in the neighborhood of $u$, but we have already seen that there are only $O(n^{r-2})$ of them. 

Clearly, there are $o(n^{k-1})$ copies of $K_k$ containing $u$ and a vertex from a $V_i$ with $|N_G(u)\cap V_i|= o(n)$. 
This implies that the number of $k$-cliques containing $u$ is indeed at most $o(n^{k-1})+\cN(K_{k-1},T(n\frac{r-2}{r-1},r-2))$ as claimed. This finishes the proof of (i).

Let us start the proof of (ii) with the special case when we only count copies of $K_k$, i.e., we are interested in $\ex(n,K_k,2K_r)$. As $r\le k<2r-1$, any two copies $K^1,K^2$ of $K_k$ must meet in at least $2k-2r+1$ vertices, otherwise their union would contain at least $2r$ vertices and thus a copy of $2K_r$. Therefore, the $k$-uniform hypergraph $H=(V,\cE)$ with $\cE=\{S\in \binom{V}{k}: G[S]=K_k\}$ is $(2k-2r+1)$-intersecting. Applying Theorem \ref{fra},  we obtain that either all the $k$-cliques of $G$ contain a fixed $(2k-2r+1)$-set, or $\cN(K_k,G)=o(n^{k-(2k-2r+1)})$. In the latter case, we are done, since $\cN(K_k,K_{2k-2r+1}\vee T(n-2k+2r-1,2r-s-1))=\Theta(n^{2r-k-1})$.

Therefore, we can assume that all the $k$-cliques of $G$ contain a fixed $(2k-2r+1)$-set $K$. Then the vertices of $K$ are adjacent to all vertices that are contained in a $K_k$ in $G$. Let $U$ denote the set of vertices outside $K$ that are contained in at least one copy of $K_k$ in $G$, so $|U|\le n-(2k-2r+1)$. If $G[U]$ is $K_{2r-k}$-free, then \begin{multline*}
    \cN(K_k,G)=\cN(K_{2r-k-1},G[U])\le \cN(K_{2r-k-1},T(n-2k+2r-1,2r-k-1))=\\
    \cN(K_k,K_{2k-2r+1}\vee T(n-2k+2r-1,2r-k-1)).
\end{multline*}. Finally, if $G[U]$ contains a copy $K'$ of $K_{2r-k}$, then $G[U\setminus [V(K')]$ cannot contain a copy of $K_{2r-k-1}$ as such a copy with $K$ and $K'$ would contain a $2K_r$. Every copy of $K_k$ in $G$ must contain $K$ and intersect $U$ in a copy of  $K_{2r-k-1}$, thus must intersect $K'$. Therefore, we have $\cN(K_k,G)=O(n^{2r-k-2})=o(\cN(K_k,K_{2k-2r+1}\vee T(n-2k+2r-1,2r-k-1))$.

Finally, let us consider the general case of (ii). Observe that similarly to the special case above, the $(k+i)$-uniform hypergraph $H_i=(V,\cE_i)$ with $\cE_i=\{S\in \binom{V}{k+i}: G[S]=K_{k+i}\}$ is $(2(k+i-r)+1)$-intersecting. Applying Theorem \ref{fra}, we obtain that the number of $(k+i)$-cliques is $O(n^{2r-k-i-1})$ and thus the number of cliques larger than $k$ is $O(n^{2r-k-2})$. This means that
\begin{itemize}
    \item 
    in order to contain $\Theta(n^{2r-k-1})$ cliques of size at least $k$, all the $k$-cliques of $G$ must contain the same $2k-r+1$ vertices just as in the special case,
    \item
    as any vertex contained in an $(k+i)$-clique is also contained in a $k$-clique, every clique of size at least $k$ is contained in $U$,
    \item
    if $G[U]$ is $K_{2r-k}$-free, then there are no cliques of size larger than $k$ in $G$, so the proof finishes as in the special case,
    \item
    if $G[U]$ does contain a $k$-clique, then, just like in the special case, there are $O(n^{2r-k-2})$ copies of $K_k$ in $G$. As the number of cliques larger than $k$ is $O(n^{2r-k-2})$, we obtain the same conclusion.
\end{itemize}
\end{proof}

\section{Forbidding $B_{r,s}$ and counting large cliques}

In this section, we prove Theorem \ref{genbrs}. Again, we begin by collecting the tools we will use. The following is a simple corollary of Theorem \ref{mq}.

\begin{proposition}\label{krr-1}
For any $r\ge 3$ and large enough $n$, we have $\ex(n,K_{r-1},K_r+ K_{r-1})=\cN(K_{r-1},T(n,r-1))$. Furthermore, if $G$ is an $n$-vertex $(K_r+K_{r-1})$-free graph with $\cN(K_{r-1},G)=\cN(K_{r-1},T(n,r-1))-o(n^{r-1})$, then $G$ has edit distance $o(n^2)$ from $T(n,r-1)$.
\end{proposition}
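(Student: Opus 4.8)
The plan is to prove the two assertions separately, since each follows quickly from a tool already stated. For the extremal value, start with the lower bound: the Turán graph $T(n,r-1)$ is $(r-1)$-partite, hence $K_r$-free, hence certainly $(K_r+K_{r-1})$-free, so $\ex(n,K_{r-1},K_r+K_{r-1})\ge \cN(K_{r-1},T(n,r-1))$. For the matching upper bound, let $G$ be an $n$-vertex $(K_r+K_{r-1})$-free graph and split into two cases. If $G$ is itself $K_r$-free, Zykov's theorem (Theorem \ref{sym}, applied with $s=r-1$) gives directly $\cN(K_{r-1},G)\le \cN(K_{r-1},T(n,r-1))$. If instead $G$ contains a copy $K$ of $K_r$, then, because $G$ is $(K_r+K_{r-1})$-free, the induced subgraph $G[V\setminus V(K)]$ contains no copy of $K_{r-1}$ at all; hence every copy of $K_{r-1}$ in $G$ must meet the $r$-element set $V(K)$, so $\cN(K_{r-1},G)\le r\binom{n-1}{r-2}=O(n^{r-2})$. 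Since $\cN(K_{r-1},T(n,r-1))=\Theta(n^{r-1})$, this is smaller than $\cN(K_{r-1},T(n,r-1))$ once $n$ is large enough. This establishes the first claim.

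For the ``furthermore'' part, the point is to identify the parameters so that Theorem \ref{mq} applies. Note that $\chi(K_r+K_{r-1})=\max\{\chi(K_r),\chi(K_{r-1})\}=r$. Thus we may invoke Theorem \ref{mq} with $H=K_r+K_{r-1}$, so that the role of ``$r+1$'' there is played by $r$ and the relevant Turán graph is $T(n,r-1)$, and with $m=r-1$, which is admissible since $2\le r-1<r=\chi(H)$ for $r\ge 3$. A graph $G$ with $\cN(K_{r-1},G)=\cN(K_{r-1},T(n,r-1))-o(n^{r-1})$ then satisfies the hypothesis of Theorem \ref{mq}, and we conclude that $G$ and $T(n,r-1)$ have edit distance $o(n^2)$, exactly as claimed. (As a consistency check with the case analysis above: a $(K_r+K_{r-1})$-free graph with $\Theta(n^{r-1})$ copies of $K_{r-1}$ cannot contain a $K_r$, so it lands in the first case.)

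There is no genuine obstacle here; the proposition is essentially a repackaging of Zykov's theorem and Theorem \ref{mq}. The only step requiring a moment of care is the bookkeeping in the second paragraph: one must correctly read off that the densest component of $K_r+K_{r-1}$ is $K_r$, so the chromatic number is $r$, the extremal graph is $T(n,r-1)$, and we are counting $K_{r-1}$'s, i.e.\ $m=r-1$ sits strictly below $\chi(H)$ — precisely the regime covered by Theorem \ref{mq}. The case split in the first paragraph (``$G$ is $K_r$-free'' versus ``$G$ contains a $K_r$'') is what makes the disconnectedness of the forbidden graph harmless.
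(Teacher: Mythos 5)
Your proof is correct and follows essentially the same route as the paper: a case split on whether $G$ contains a $K_r$ (in which case all $K_{r-1}$'s meet that copy, giving only $O(n^{r-2})$ of them), Zykov's theorem in the $K_r$-free case, and Theorem \ref{mq} for the stability statement. The only cosmetic difference is that you invoke Theorem \ref{mq} directly with $H=K_r+K_{r-1}$ (using $\chi(K_r+K_{r-1})=r$) rather than with $H=K_r$ after reducing to the $K_r$-free case; both parameter choices are valid.
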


\begin{proof}
If an $n$-vertex graph $G$ contains a copy $K$ of $K_r$, then all the copies of $K_{r-1}$ must meet $K$, so their number is $O(n^{r-2})=o(\cN(K_{r-1},T(n,r-1))$. If $G$ is $K_r$-free, then by Theorem \ref{sym}, we have $\cN(K_{r-1},G)\le \cN(K_{r-1},T(n,r-1))$ and the furthermore part follows from Theorem \ref{mq}. 
\end{proof}

If $L$ is a set of non-negative integers, we say that a family $\cF$ of sets is $L$-intersecting if for any distinct $F,F'\in \cF$, we have $|F\cap F'|\in L$. 

\begin{thm}[Frankl, Füredi \cite{FF}]\label{ff}
Let $\cF\subseteq \binom{[n]}{k}$ be a $\{0,1,\dots, \ell-1, k-\ell',k-\ell'+1,\dots,k-1\}$-intersecting family. Then the following statements hold.

(i) There exists a constant $d_k$ such that $|\cF|\le d_kn^{\max\{\ell,\ell'\}}$.

(ii) If $\ell'>\ell$ and $n\ge n_0(k)$, then $|\cF|\le \binom{n-k+\ell'}{\ell'}$ and equality holds if and only if there exists a $(k-\ell')$-subset $X$ of $[n]$ such that $\cF=\{F\in\binom{[n]}{k}: X\subset F\}$.

(iii) If $\ell\ge \ell'$ and $k-\ell$ has a primepower divisor $q$ with $q>\ell'$, then $|\cF|\le (1+o(1))\binom{n}{\ell}\frac{\binom{k+\ell'}{\ell'}}{\binom{k+\ell'}{\ell}}$.
\end{thm}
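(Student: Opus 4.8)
The plan is \emph{not} to push the linear-algebra method directly: the natural ``delta polynomial'' $p_F(\mathbf x)=\prod_{j=0}^{\ell-1}(\langle\mathbf 1_F,\mathbf x\rangle-j)\prod_{j=k-\ell'}^{k-1}(\langle\mathbf 1_F,\mathbf x\rangle-j)$ has degree $\ell+\ell'$, so it only yields the weak bound $|\cF|=O(n^{\ell+\ell'})$. Instead I would exploit the two-block shape of $L$ through a clustering of $\cF$. Call $F,F'$ \emph{close} when $|F\cap F'|\ge k-\ell'$, and let the clusters of $\cF$ be the connected components of the close-graph. The key elementary observation is that two sets in different clusters are not close, so their intersection is at most $k-\ell'-1$; since it also lies in $L$, it is in fact at most $\ell-1$. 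Hence a transversal of the clusters is $\{0,\dots,\ell-1\}$-intersecting and, by the Ray--Chaudhuri--Wilson inequality, there are at most $\binom n\ell$ clusters. The same remark shows that the families $\partial_\ell\cC:=\bigcup_{F\in\cC}\binom F\ell$ of $\ell$-subsets are pairwise disjoint across clusters, so $\sum_\cC|\partial_\ell\cC|\le\binom n\ell$; I will use this accounting for part (iii).

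The main work, and what I expect to be the main obstacle, is the anatomy of a single cluster $\cC$. Chaining the close relation and noting that the forbidden window $\{\ell,\dots,k-\ell'-1\}$ of $L$ cannot be stepped over in one move (so, at least in the principal range of $k$, a cluster is a clique in the close-graph; the remaining small-$k$ ranges must be treated directly), one reduces to a family whose pairwise intersections lie in the $\ell'$-element set $\{k-\ell',\dots,k-1\}$. For such a family I would invoke Deza's $\Delta$-system theorem together with its Deza--Frankl refinements to get the dichotomy: either $|\cC|$ is bounded by a constant $C(k)$, or $\cC$ is ``sunflower-like'' — there is a \emph{kernel} $Y_\cC$ with $k-\ell'\le|Y_\cC|\le k-1$ that is contained in all but boundedly many members of $\cC$ (hence in an actual member). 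Making this dichotomy quantitative, with explicit constants, is the technical heart of the theorem.

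Granting the dichotomy, part (i) follows by counting. A cluster with kernel of size $k-j$ has $O(n^{j})$ members; and if $\cC,\cC'$ are two such clusters, then choosing members through their kernels and letting the ``free'' parts overlap maximally forces $|Y_\cC\cap Y_{\cC'}|\le\ell-1-j-j'$ (in particular there is at most one cluster whose kernel has size $k-j$ once $j>(\ell-1)/2$). Thus the kernels of any fixed size $k-j$ form a family of sets with pairwise intersection below $\ell-2j$, so there are $O(n^{\max\{0,\ell-2j\}})$ of them, and the clusters with kernel of size $k-j$ contribute $O(n^{j})\cdot O(n^{\ell-2j})=O(n^{\ell-j})$ in total. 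Together with the $O(\binom n\ell)$ bounded clusters, summing over $1\le j\le\ell'$ gives $|\cF|=O(n^{\max\{\ell,\ell'\}})$.

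Finally I would upgrade this to the sharp statements. For (ii), where $\ell'>\ell$, the inequality $\ell-1-j-j'<0$ shows there is at most one ``large'' cluster, and it has a kernel $Y$ of size exactly $k-\ell'$ (a bigger kernel gives only $O(n^{\ell'-1})$ sets); any further set $F'$ with $Y\not\subseteq F'$ would, tested against the roughly $\binom{n-k+\ell'}{\ell'}$ members $Y\cup S$ of that cluster, produce $\ell'+1$ consecutive values of $|F'\cap(Y\cup S)|$ all forced to lie in $L$, which is impossible since neither block of $L$ is that long and the gap between them is nonempty. Hence $\cF$ sits inside the star on $Y$, giving $|\cF|\le\binom{n-k+\ell'}{\ell'}$ with equality only for the full star. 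For (iii), where $\ell\ge\ell'$, the plan is to prove $|\cC|\le(1+o(1))\frac{\binom{k+\ell'}{\ell'}}{\binom{k+\ell'}{\ell}}|\partial_\ell\cC|$ for every cluster — the extremal ratio being realised by taking all $k$-subsets of a $(k+\ell')$-set — and then to sum against $\sum_\cC|\partial_\ell\cC|\le\binom n\ell$; the asymptotically matching construction is $\bigcup_M\binom Mk$ over the blocks $M$ of a near-perfect $(\ell,k+\ell')$-packing of $[n]$, which exists by the Rödl nibble. I expect the prime-power divisibility hypothesis on $k-\ell$ to be used exactly in the Frankl--Wilson-type modular step that controls the admissible sizes and shapes of a cluster, which is the crux of the upper bound in (iii).
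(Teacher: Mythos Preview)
This theorem is not proved in the paper at all: it is quoted as a result of Frankl and F\"uredi \cite{FF}, and the paper only imports from that source the structural Lemma~\ref{struc}. So there is no ``paper's own proof'' to compare against; the relevant comparison is between your sketch and what Lemma~\ref{struc} reveals about the Frankl--F\"uredi argument.

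On that score your architecture is essentially the right one and matches \cite{FF}: cluster $\cF$ by the ``close'' relation, observe that sets from distinct clusters meet in fewer than $\ell$ elements so their $\ell$-shadows are disjoint (this is precisely the third bullet of Lemma~\ref{struc}), and then show that each large cluster carries a common $(k-\ell')$-kernel (the second bullet of Lemma~\ref{struc}), with a small residual family absorbing the exceptions (the first bullet). Your plan for (iii) --- bounding $|\cC|/|\partial_\ell\cC|$ cluster by cluster and summing against $\sum_\cC|\partial_\ell\cC|\le\binom{n}{\ell}$, with the lower bound via a near-optimal $(\ell,k+\ell')$-packing --- is also the Frankl--F\"uredi route.

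Where your sketch is shaky is in the counting step for (i). The inequality $|Y_\cC\cap Y_{\cC'}|\le \ell-1-j-j'$ is not justified: from $|F\cap F'|\le \ell-1$ for $F\supseteq Y_\cC$, $F'\supseteq Y_{\cC'}$ you only get $|Y_\cC\cap Y_{\cC'}|\le \ell-1$, and your phrase ``letting the free parts overlap maximally'' presupposes that you can \emph{choose} $F\setminus Y_\cC$ to sit inside $Y_{\cC'}$ (and symmetrically), which a general cluster need not allow. This gap does not kill (i) --- the disjointness of the $\ell$-shadows already gives $\sum_\cC|\partial_\ell\cC|\le\binom{n}{\ell}$, and the within-cluster analysis bounds $|\cC|$ against $|\partial_\ell\cC|$ and against $n^{\ell'}$ --- but your stated route through kernel-intersection bounds does not work as written. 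Similarly, the assertion that a cluster is a clique in the close-graph (so that all pairwise intersections inside a cluster lie in $\{k-\ell',\dots,k-1\}$) is exactly the delicate point; in \cite{FF} this is handled not by a one-line chaining argument but by the sunflower/$\Delta$-system machinery that produces the partition in Lemma~\ref{struc} together with the $O(n^{\ell'-1})$ exceptional class, and you should expect to need that here rather than a direct clique claim.
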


The result of Theorem \ref{ff} itself will not be sufficient for us, we will also need some parts of the proof.
In the following lemma, we gather the parts of the
Frankl-Füredi proof that we will use.

To state the lemma we need to define the $i$-shadow of a family $\cF$ of sets as $\Delta_i(\cF):=\{G: |G|=i, ~\exists F\in \cF \text{ such that }  G\subset F\}$.

\begin{lemma}[Lemma 6.1 and several propositions in \cite{FF}]\label{struc}
If $\ell<\ell'$ and $\cF\subseteq \binom{[n]}{k}$ is a $\{0,1,\dots, \ell-1, k-\ell',k-\ell'+1,\dots,k-1\}$-intersecting family, then $\cF$ can be partitioned into $\cF_1\cup \cF_2\cup \cdots\cup \cF_h\cup\cF_{h+1}$ such that
\begin{itemize}
    \item 
    $|\cF_{h+1}|=O(n^{\ell'-1})$,
    \item
    for every $1\le j\le h$ there exists a $(k-\ell')$-set $A_j$ such that $\cF_j\subseteq\{G\in \binom{[n]}{k}: A_j \subset G\}$,
    \item
    writing $\cH_j=\{F\setminus A_j:F\in \cF_j\}$ we have that the $\ell$-shadows are pairwise disjoint, i.e., for every $1\le i<j\le h$  $\Delta_\ell(\cH_i)\cap \Delta_\ell(\cH_j)=\emptyset$.
\end{itemize}
\end{lemma}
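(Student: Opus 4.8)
The plan is a greedy ``kernel‑peeling'' argument, in which the two forbidden intervals of intersection sizes are used both to keep the leftover $\cF_{h+1}$ small and to force the link‑shadows apart. Throughout I assume $\ell+\ell'<k$, which we may do since otherwise the intervals $\{0,\dots,\ell-1\}$ and $\{k-\ell',\dots,k-1\}$ together cover $\{0,\dots,k-1\}$ and the hypothesis is vacuous. Fix a large constant $C=C(k)$ (big enough for the pigeonhole estimates below), and for a subfamily $\cG\subseteq\cF$ call a $(k-\ell')$-set $A$ a \emph{kernel of $\cG$} if at least $C$ members of $\cG$ contain $A$.

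The first thing I would prove is the basic counting fact: \emph{if $\cG\subseteq\cF$ has no kernel, then $|\cG|=O(n^{\ell'-1})$.} Since $\ell+\ell'<k$, any two distinct $F,F'\in\cF$ with $|F\cap F'|\ge\ell$ actually satisfy $|F\cap F'|\ge k-\ell'$, because the values $\ell,\dots,k-\ell'-1$ are forbidden. Hence, if some $\ell$-set $B$ lay in $C':=1+C\binom{k-\ell}{\ell'}$ members of $\cG$, then fixing one of them $F_0\supseteq B$, each of the others would meet $F_0$ in at least $k-\ell'$ points and so contain a $(k-\ell'-\ell)$-subset of the $(k-\ell)$-set $F_0\setminus B$; by pigeonhole one such subset $W$ lies in at least $C$ of them, so $W\cup B$ is a kernel of $\cG$, a contradiction. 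Thus every $\ell$-set has $\cG$-degree below $C'$, and $|\cG|\le\frac{C'}{\binom k\ell}\binom n\ell=O(n^\ell)=O(n^{\ell'-1})$, using $\ell\le\ell'-1$.

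Next I would build the parts. Process the kernels of $\cF$ in decreasing order of $\cF$-degree, maintaining a set of already‑assigned members and an accumulated $\ell$-uniform shadow $S$, initially empty. When a kernel $A$ is processed, place into a new part $\cF_A$ exactly those still‑unassigned $F\supseteq A$ whose tail $F\setminus A$ (an $\ell'$-set) contains no member of $S$, and then add to $S$ every $\ell$-subset of those tails; the remaining members of the star of $A$ stay unassigned. Let $\cF_{h+1}$ collect all sets still unassigned after every kernel has been processed, and discard any empty parts. By construction each $\cF_j$ ($j\le h$) lies in the star of its $(k-\ell')$-set $A_j$, and the ``avoid $S$'' rule makes the shadows $\Delta_\ell(\cH_j)$ of the links $\cH_j=\{F\setminus A_j:F\in\cF_j\}$ pairwise disjoint. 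The unassigned sets are of two kinds: those containing no kernel of $\cF$, which form a subfamily with no kernel (a $(k-\ell')$-set with $C$ members among kernel‑free sets would be a kernel of $\cF$, contradiction), hence $O(n^{\ell'-1})$ by the basic fact; and those that do contain a kernel but were blocked by $S$ at every kernel they contain.

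The hard part — which I expect to be the real obstacle — is bounding the number of these \emph{blocked} sets by $O(n^{\ell'-1})$; this is essentially the content of Lemma~6.1 and the surrounding propositions of Frankl and Füredi \cite{FF}. The difficulty is that a blocked $F$ only certifies some earlier part $\cF_i$, a member $F'\in\cF_i$, and an $\ell$-set $B\subseteq F\cap F'$ with $|F\cap F'|\ge k-\ell'$, and many $F$ can sit close to a single $F'$, so the crude count is far too weak. Getting the sharp bound needs a structural analysis of how kernels overlap: two kernels inside a common member of $\cF$ meet in at least $k-2\ell'$ points, so one must group ``close'' kernels into clusters, control how a set containing several kernels is processed, and show that a set can be blocked only through a bounded local configuration coming from a ``far'' cluster. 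Carrying out this bookkeeping — separating far from close kernels and extracting the clean $(k-\ell')$-set $A_j$ representing each cluster — is where the length of the Frankl–Füredi argument lies, which is why I would invoke their statement rather than reprove it here.
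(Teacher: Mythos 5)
The statement you were asked to prove is imported verbatim from Frankl and Füredi: the paper gives no proof at all, only the attribution to Lemma~6.1 and the surrounding propositions of \cite{FF}. So if your intent is ultimately "cite \cite{FF}", you coincide with the paper; but your proposal is framed as a (partially) self-contained argument, and judged that way it has a genuine gap exactly where you flag it. The parts you do prove are correct: the pigeonhole "basic counting fact" (a kernel-free subfamily has size $O(n^{\ell})=O(n^{\ell'-1})$, using that intersection sizes in $[\ell,k-\ell'-1]$ are forbidden) is sound, and your greedy peeling visibly produces parts lying in stars of $(k-\ell')$-sets whose link $\ell$-shadows are pairwise disjoint, with the kernel-free leftovers controlled by the counting fact.

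The problem is that the first bullet of the lemma, $|\cF_{h+1}|=O(n^{\ell'-1})$, is precisely the bound on your \emph{blocked} sets, and this is not established. Deferring it to \cite{FF} does not close the gap: their Lemma~6.1 bounds the exceptional family arising in \emph{their} decomposition, not the sets left behind by your particular greedy order, so the citation either does not apply to your construction or amounts to citing the very statement under proof (in which case your construction is redundant and the proof is just the paper's citation). As you yourself observe, a blocked $F$ only certifies some previously assigned $F'$ with $|F\cap F'|\ge k-\ell'$, and the naive count of such $F$ is $O(n^{\ell'})$ per $F'$, too weak by a factor of $n$; the missing ingredient is the structural control of overlapping kernels/clusters (the delta-system/sunflower analysis carried out in \cite{FF}), without which there is no argument that your greedy process leaves only $O(n^{\ell'-1})$ sets unassigned. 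So as written the proposal is incomplete rather than wrong: either carry out that bookkeeping for your construction, or drop the construction and quote the Frankl--Füredi decomposition outright, as the paper does.
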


We will also use the Lovász version \cite{L} of the Kruskal-Katona shadow theorem \cite{Kr,Ka}. It states that if a family $\cH$ of $k$-subsets has size $\binom{x}{k}=\frac{x(x-1)\dots (x-k+1)}{k!}$ for some real $x$, then for any $i\le k$ we have $|\Delta_i(\cH)|\ge \binom{x}{i}$. 

We will also use a theorem of Andrásfai, Erd\H os, and Sós \cite{AES} that states that an $n$-vertex $K_r$-free graph with chromatic number at least $r$ contains a vertex of degree at most $(1-\frac{1}{(r-1)-4/3})n$.

For integers $n,r,s,t$ with $r>s+t+1$ and $n>2t+s+1$, let us define the function $$f(n)=f_{r,s,t}(n)=\cN(K_{r+t},K_{s+2t+1}\vee T(n-s-2t-1,r-s-t-1))=\prod_{i=0}^{r-s-t-2}\left\lfloor \frac{n-s-2t-1+i}{r-s-t-1}\right\rfloor.$$ 

Observe that for fixed $r,s,t$, the function $f(n)$ is a polynomial of $n$ of degree $r-s-t-1$. We will need the following simple properties of $f(n)$.

\begin{proposition}\label{properties}

(i) For any pair $n_1,n_2$ of positive integers $f(n_1)+f(n_2)\le f(n_1+n_2)$ holds.

(ii) For any $\varepsilon>0$ there exists $\delta>0$ such that if $n$ is large enough, then $f((1-\varepsilon)n)+f(\varepsilon n+o(n))<(1-\delta)f(n)$ holds.

(iii) If $r-s-t-1\ge 2$, $y=o(x)$ and $x=o(n)$, then $f(n-x)+f(x+y)<f(n)-\Omega(xn^{r-s-t-2})$ holds.
\end{proposition}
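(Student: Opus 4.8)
The plan is to reduce all three parts to two elementary observations about the function $f$. Abbreviate $m:=r-s-t-1\ge 1$ and $c:=s+2t+1$, so that $f(n)=\prod_{i=0}^{m-1}\lfloor (n-c+i)/m\rfloor$. The first observation is that the multiset $\{\lfloor (N+i)/m\rfloor:0\le i\le m-1\}$ is exactly the multiset of part sizes of the Tur\'an graph $T(N,m)$, so that $f(n)=\cN(K_m,T(n-c,m))$, the product of those part sizes. The second observation is that the part sizes of $T(N,m)$ differ from their common mean $N/m$ by less than $1$ and, by Hermite's identity $\sum_{i=0}^{m-1}\lfloor (N+i)/m\rfloor=N$, these discrepancies sum to zero; expanding the product then gives \[ f(n)=\left(\frac{n-c}{m}\right)^{m}+O(n^{m-2}), \] so in particular $f(n)=(1+o(1))m^{-m}n^{m}$.

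For part (i) I would use the first observation together with Zykov's theorem. Set $g(N):=f(N+c)=\cN(K_m,T(N,m))$. If $m\ge 2$, the disjoint union $T(N_1,m)\cup T(N_2,m)$ is a $K_{m+1}$-free graph on $N_1+N_2$ vertices containing exactly $g(N_1)+g(N_2)$ copies of $K_m$, so Theorem~\ref{sym} gives $g(N_1)+g(N_2)\le \cN(K_m,T(N_1+N_2,m))=g(N_1+N_2)$; the case $m=1$ (where $g(N)=N$) and the degenerate cases $N_1+N_2\le m$ (where $g$ vanishes on $N_1$ and on $N_2$) are immediate. Since $g$ is nonnegative and nondecreasing and $c\ge 0$, this yields $f(n_1)+f(n_2)=g(n_1-c)+g(n_2-c)\le g(n_1+n_2-2c)\le g(n_1+n_2-c)=f(n_1+n_2)$.

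For parts (ii) and (iii) I would use the asymptotic expansion. Both statements are invoked in, and are correct for, the range $m\ge 2$ (which part (iii) makes explicit and which holds in every application of this proposition in the paper), so I assume it. For (ii): $f((1-\varepsilon)n)+f(\varepsilon n+o(n))=\bigl((1-\varepsilon)^{m}+\varepsilon^{m}+o(1)\bigr)m^{-m}n^{m}$ while $f(n)=(1+o(1))m^{-m}n^{m}$; since $0<\varepsilon<1$ and $m\ge 2$ we have $(1-\varepsilon)^{m}+\varepsilon^{m}<(1-\varepsilon)+\varepsilon=1$, so $\delta:=\tfrac{1}{2}\bigl(1-(1-\varepsilon)^{m}-\varepsilon^{m}\bigr)>0$ works once $n$ is large. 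For (iii): from the expansion, $f(n)-f(n-x)=m^{-m}\bigl((n-c)^{m}-(n-x-c)^{m}\bigr)+O(n^{m-2})$, and since $x=o(n)$ we get $(n-c)^{m}-(n-x-c)^{m}\ge mx(n-x-c)^{m-1}=mx(n-c)^{m-1}(1-o(1))$, hence $f(n)-f(n-x)=\Omega(xn^{m-1})$; on the other hand $f(x+y)=O((x+y)^{m})=O(x^{m})=o(xn^{m-1})$ because $y=o(x)$ and $x=o(n)$. Combining, $f(n-x)+f(x+y)\le f(n)-\Omega(xn^{m-1})+o(xn^{m-1})\le f(n)-\Omega(xn^{m-2})$.

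The step I expect to be the main obstacle is obtaining the sharp error term $O(n^{m-2})$ in the expansion of $f$, rather than the naive $O(n^{m-1})$: this is what guarantees that the first-order gain $\Omega(xn^{m-1})$ in part (iii) survives even when $x$ is bounded, and it relies exactly on the balancedness of the Tur\'an part sizes, i.e.\ on the linear coefficient vanishing (Hermite's identity). Everything else is routine; the only caveat worth flagging is the restriction $m=r-s-t-1\ge 2$ needed for parts (ii) and (iii), since for $m=1$ the function $f$ is linear and those inequalities fail as stated.
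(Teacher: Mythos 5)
Your proof is correct; note that the paper itself offers no proof of Proposition \ref{properties} (it is asserted as a collection of "simple properties" of $f$), so there is no authorial argument to compare against — but your write-up supplies exactly the kind of routine verification that was left implicit. Two remarks. First, your route through the identity $f(n)=\cN(K_{m},T(n-c,m))$ with $m=r-s-t-1$, $c=s+2t+1$ is clean: part (i) via the disjoint union $T(N_1,m)\cup T(N_2,m)$ and Zykov's theorem (Theorem \ref{sym}) is a nice alternative to the direct superadditivity computation with floors one would otherwise grind through, and the expansion $f(n)=((n-c)/m)^{m}+O(n^{m-2})$ (with the linear term killed by $\sum_i\lfloor (N+i)/m\rfloor=N$) is precisely the point that makes (iii) work uniformly down to bounded $x$, which the paper does need when it applies (iii) with $x=S$ possibly constant. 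Second, two small slips worth fixing: in your notation the paper's exponent $r-s-t-2$ is $m-1$, not $m-2$, so your final line understates what you proved — but your penultimate inequality $f(n-x)+f(x+y)\le f(n)-\Omega(xn^{m-1})+o(xn^{m-1})$ already yields the required $f(n)-\Omega(xn^{m-1})$, so nothing is missing; and your caveat that (ii) fails for $m=1$ (where $f$ is essentially linear) is a fair catch — the hypothesis $r-s-t-1\ge 2$ is stated only in (iii) but is implicitly in force wherever the proposition is used, since there $\ell'=r-t-2\ge 2$. Also, part (i) should be read on the domain $n_i>2t+s+1$ on which $f$ is defined (as your argument implicitly does); for smaller arguments the floor formula can even violate the inequality, but that imprecision is in the paper's phrasing, not in your proof.
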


Now we are ready to prove Theorem \ref{genbrs}.

\begin{proof}[Proof of Theorem \ref{genbrs}]
Each of the lower bounds is obtained from the following construction: let $m=s+2t+1$ and $G$ be the join  $K_m\vee T(n-m,r-s-t-1)$. As $m+r-s-t-1=r+t$, we have $\cN(K_{r+t},G)=\cN(K_{r-s-t-1},T(n-m,r-s-t-1))=(1+o(1))(\frac{n-m}{r-s-t-1})^{r-s-t-1}$. To see that $G$ is $B_{r,s}$-free, observe that out of the $2r-s$ vertices of a copy of $B_{r,s}$, at least $2r-s-m=2r-2s-2t-1$ vertices belong to $T(n-m,r-s-t-1)$. Therefore at least $r-s-t$ vertices belong to the same $K_r$ of $B_{r,s}$ in $T(n-m,r-s-t-1)$. As there is no clique of $r-s-t$ vertices in $T(n-m,r-s-t-1)$, the graph $G$ is indeed $B_{r,s}$-free.

For the general upper bound, let $G$ be a $B_{r,s}$-free graph on $n$ vertices. Define the $(r+t)$-uniform family $\cF_G=\{K\subseteq \binom{V(G)}{r+t}: G[K]=K_{r+t}\}$. Observe that $\cF_G$ is $\{0,1,\dots, s-1, 2t+s+1,2t+s+1,\dots,r+t-1\}$-intersecting. 
Indeed, assume that two cliques $K_1,K_2$ each of size $r+t$ intersect in at least $s$, but less than $(r+t)-(r-t-s-1)=2t+s+1$ vertices. Then the union of $K_1$ and $K_2$ contains at least $2(r+t)-(2t+s)=2r-s$ vertices and their intersection contains at least $s$ vertices, and thus $G$ contains a copy of $B_{r,s}$, a contradiction. We can apply Theorem \ref{ff} (i) to show that $\cN(K_{r+t},G)=O(n^{r-s-t-1})$.

Finally, we consider the case $s=1$. Let $k=r+t$, and $\ell'=r-t-2$. Suppose $G$ is such that $\cN(K_{r+t},G)\ge f(n)$. By the assumption $t+3<r$, we have $1<\ell'$ and so by Lemma \ref{struc},  we obtain a partition $\cF_G=\cF_1\cup \cdots\cup \cF_h\cup\cF_{h+1}$ and $m$-sets $A_1,A_2,\dots,A_h$ with the properties ensured by Lemma \ref{struc}. We introduce positive reals $x_1,\dots, x_h$ such that $|\cF_i|=|\cH_i|=\binom{x_i}{\ell'}$. Without loss of generality, $x_1\ge x_2\ge\cdots\ge x_h$. Let $M_j=\cup_{
F\in \cF_j}F$ and  clearly, we have $|M_j|\ge x_j$.

\begin{clm}\label{klem}
There exists an integer $n_0$ and a constant $C$ such that if $n\ge n_0$, then $\Delta_1(\cF_1)\ge n-C$.
\end{clm}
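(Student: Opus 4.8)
Here is the plan. The goal is to show that the block $\cF_1$ — the one with the largest parameter $x_1$ — already contains all but boundedly many vertices of $G$; throughout $s=1$, $k=r+t$, $m=2t+2$, $\ell'=r-t-2\ge 2$, and all constants below depend only on $r,t$. Since $\Delta_1(\cF_1)=\bigcup_{F\in\cF_1}F=M_1$ and $M_1$ is the disjoint union $A_1\cup\Delta_1(\cH_1)$, we have $|\Delta_1(\cF_1)|=m+|\Delta_1(\cH_1)|$, so I will instead prove $|\Delta_1(\cH_1)|\ge n-C'$ for a constant $C'$. Set $y_j:=|\Delta_1(\cH_j)|$. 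By Lemma \ref{struc} with $\ell=s=1$ the sets $\Delta_1(\cH_j)$ are pairwise disjoint, hence $\sum_j y_j\le n$ (and $y_j\ge x_j$ by Kruskal--Katona--Lovász, although this will not be needed).

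The combinatorial core, which I expect to be the main obstacle, is the following claim: for each $j$ with $\cF_j\neq\emptyset$, the graph $G_j$ induced on the common neighbourhood $\bigcap_{a\in A_j}N_G(a)$ is $2K_{\ell'+1}$-free. To prove it I would assume $G_j$ contains two vertex-disjoint $(\ell'+1)$-cliques $X_1,X_2$, pick $a^*\in A_j$ and disjoint $t$-subsets $A',A''$ of $A_j\setminus\{a^*\}$ (possible since $|A_j\setminus\{a^*\}|=2t+1$), and observe that since $A_j$ is a clique (it lies in a copy of $K_{r+t}$), every vertex of $X_1\cup X_2$ is adjacent to all of $A_j$, and $1+t+(\ell'+1)=r$, the sets $\{a^*\}\cup A'\cup X_1$ and $\{a^*\}\cup A''\cup X_2$ are $r$-cliques of $G$ meeting exactly in $a^*$ — a copy of $B_{r,1}$, a contradiction.

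Given this, I would bound $|\cF_j|$ as follows. Each $F\in\cF_j$ yields an $\ell'$-clique $F\setminus A_j\subseteq\Delta_1(\cH_j)\subseteq V(G_j)$, so $|\cF_j|=|\cH_j|\le\cN(K_{\ell'},G[\Delta_1(\cH_j)])$, and $G[\Delta_1(\cH_j)]$ is a $2K_{\ell'+1}$-free graph on $y_j$ vertices. Deleting the at most $\ell'+1$ vertices of one copy of $K_{\ell'+1}$ (if present) leaves a $K_{\ell'+1}$-free graph, and Theorem \ref{sym} then gives
\[
|\cF_j|\le\cN\bigl(K_{\ell'},T(y_j,\ell')\bigr)+O\bigl(y_j^{\ell'-1}\bigr)\le\frac{1}{(\ell')^{\ell'}}\,y_j^{\ell'}+O\bigl(y_j^{\ell'-1}\bigr).
\]
Summing over $j$, using $|\cF_{h+1}|=O(n^{\ell'-1})$ and $\sum_j y_j^{\ell'-1}\le\bigl(\sum_j y_j\bigr)^{\ell'-1}\le n^{\ell'-1}$, yields
\[
f(n)\le\cN(K_{r+t},G)=\sum_{j=1}^h|\cF_j|+|\cF_{h+1}|\le\frac{1}{(\ell')^{\ell'}}\sum_j y_j^{\ell'}+O(n^{\ell'-1}).
\]

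To finish I would use that $f(n)=\prod_{i=0}^{\ell'-1}\bigl\lfloor\tfrac{n-2t-2+i}{\ell'}\bigr\rfloor\ge\bigl(\tfrac{n-c_0}{\ell'}\bigr)^{\ell'}$ for the constant $c_0=r+t$, together with $\sum_j y_j^{\ell'}\le y_1^{\ell'-1}\sum_j y_j\le n\,y_1^{\ell'-1}$ (since $y_1=\max_j y_j$). Combining the last two displays gives $(n-c_0)^{\ell'}\le n\,y_1^{\ell'-1}+O(n^{\ell'-1})$, hence $y_1^{\ell'-1}\ge n^{\ell'-1}-O(n^{\ell'-2})$, from which an elementary computation forces $y_1\ge n-C'$, and therefore $|\Delta_1(\cF_1)|=m+y_1\ge n-C'$. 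Besides the $2K_{\ell'+1}$-freeness claim, the one point requiring care is that Theorem \ref{sym} supplies precisely the factor $(\ell')^{-\ell'}$, which matches the leading coefficient of $f(n)$; a weaker bound such as $|\cF_j|\le\binom{y_j}{\ell'}$ would only give $y_1=\Omega(n)$, not $y_1=n-O(1)$.
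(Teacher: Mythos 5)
Your core argument is sound and takes a genuinely different, more elementary route than the paper. The paper bounds each block via Proposition \ref{krr-1}: since every vertex of $M_j$ is joined to all of $A_j$, the graph $G[M_j\setminus A_j]$ is $(K_{\ell'+1}+K_{\ell'})$-free, whence $|\cF_j|\le f(|M_j|)$; it then feeds these bounds into the superadditivity and deficiency properties of $f$ (Proposition \ref{properties}) and runs a two-stage contradiction with thresholds $n^{2/3}$ and $n^{1/3}$ to force $|M_1|\ge n-C$. You instead prove the slightly weaker structural fact that the common neighbourhood of $A_j$ is $2K_{\ell'+1}$-free (your $B_{r,1}$-construction with $1+t+(\ell'+1)=r$ and two disjoint $t$-subsets of $A_j\setminus\{a^*\}$ is correct, since the common neighbourhood is disjoint from $A_j$), delete one $(\ell'+1)$-clique and apply Zykov's theorem to get $|\cF_j|\le (\ell')^{-\ell'}y_j^{\ell'}+O(y_j^{\ell'-1})$ with a constant depending only on $r,t$, and then close the argument in one shot via $\sum_j y_j^{\ell'}\le(\max_j y_j)^{\ell'-1}\sum_j y_j\le n(\max_j y_j)^{\ell'-1}$, using the disjointness of the $1$-shadows and $\ell'\ge 2$. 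Since the leading constant $(\ell')^{-\ell'}$ matches that of $f(n)$, this indeed forces $\max_j y_j\ge n-C'$; for this claim your route needs neither Proposition \ref{krr-1} nor Proposition \ref{properties}, which is a genuine simplification.

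One step is unjustified: the parenthetical ``since $y_1=\max_j y_j$''. The indexing $x_1\ge x_2\ge\cdots\ge x_h$ orders the blocks by $|\cF_j|=\binom{x_j}{\ell'}$, not by shadow size, and a priori a block with fewer sets could have a larger $1$-shadow, while the Claim is specifically about $\cF_1$. The gap is easy to close: run your computation with $j^*$ an index maximizing $y_j$, obtaining $y_{j^*}\ge n-C'$. Disjointness of the shadows then gives $y_j\le C'$ for every $j\ne j^*$, hence $|\cF_j|\le\binom{y_j}{\ell'}\le\binom{C'}{\ell'}=O(1)$ for those $j$; since $h\le n$ and $|\cF_{h+1}|=O(n^{\ell'-1})$, it follows that $|\cF_{j^*}|\ge f(n)-O(n^{\ell'-1})$, which for large $n$ strictly exceeds every other $|\cF_j|$, so $x_{j^*}$ is strictly maximal and $j^*=1$. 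Then $|\Delta_1(\cF_1)|\ge |\Delta_1(\cH_1)|=y_1\ge n-C'$, as required. With this short patch your proof is complete.
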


\begin{proof}
By the Lovász version of the Kruskal-Katona shadow theorem, we know that $|\Delta_\ell(\cH_i)|\ge x_i$ and thus by Lemma \ref{struc} we have $\sum_{i=1}^hx_i\le n$. Also, $x_i\ge x_j$ implies $\frac{\binom{x_i}{\ell'}}{x_i}\ge \frac{\binom{x_j}{\ell'}}{x_j}$. Therefore, 
\begin{equation}\label{eq}
\sum_{j=i}^h|\cF_j|=\sum_{j=i}^h|\cH_j|=\sum_{j=i}^hx_j\frac{\binom{x_j}{\ell'}}{x_j}\le \frac{\binom{x_i}{\ell'}}{x_i}\sum_{j=i}^hx_j\le \frac{\binom{x_i}{\ell'}}{x_i}n=O((x_i/n)^{\ell'-1}n^{\ell'}).
\end{equation}
Then $|M_1|\ge \varepsilon n$ for some fixed $\varepsilon>0$ as otherwise $|\cF|=o(n^{\ell'})$, while $f(n)=\Theta(n^{\ell'})$.
 By Lemma \ref{struc}, we have that the sets $M_j\setminus A_j$ are pairwise disjoint and thus $|M_j\setminus \cup_{j'=1}^{j-1}M_{j'}|\ge |M_j|-(j-1)m$. Let $j_1$ be the largest index $j$ with $|M_j|\ge (m+1)n^{2/3}$. Then (\ref{eq}) implies that $\sum_{j=j_1+1}^h|\cF_j|=O(n^{\ell'-1/3})$, and Lemma~\ref{struc} shows that $|\cF_{h+1}|=O(n^{\ell'-1})$. We claim that $j_1\le n^{1/3}$. Indeed, if  not, then for $j\le n^{1/3}$, we have $|M_j\setminus \cup_{j'=1}^{j-1}M_{j'}|\ge |M_j|-jm> n^{2/3}$ 
 and $n\ge \sum_{j=1}^{n^{1/3}}|M_j\setminus \cup_{j'=1}^{j-1}M_{j'}|> n^{1/3}n^{2/3}$, a contradiction. As a consequence, we also have \[\sum_{j=1}^{j_1}|M_j|\le n+\sum_{j=1}^{j_1}(j-1)m\le n+j_1^2m=n+O(n^{2/3}).\]
 
 Observe that for every $j\le h$, every vertex of $M_j$ is connected to every vertex of $A_j$. This implies that $G[M_j\setminus A_j]$ is $K_{r-t-1}+K_{r-t-2}$-free, and thus, by Proposition \ref{krr-1}, $|\cF_j|=\cN(K_{r+t},G[M_j])\le f(|M_j|)$. Using Proposition \ref{properties} (i) and (\ref{eq}) we obtain \begin{equation}\label{eqnew}
\sum_{j=1}^{h+1}|\cF_j|\le f(|M_1|)+f(n+j_1^2m-|M_1|)+O(n^{\ell'-1/3})+O(n^{\ell'-1}).   
 \end{equation}
 Assume first that $|M_1|< n-n^{2/3}\log n$. Let $x=n^{2/3}\log n-j_1^2$ and $y=j_1^2$. Then Proposition  \ref{properties} (iii) yields that $f(|M_1|)+f(n+j_1^2-|M_1|)<f(n)-\Omega(xn^{\ell'-1})$. Therefore, the right hand side of (\ref{eqnew}) is less than $f(n)-\Omega(xn^{\ell'-1})+O(n^{\ell'-1/3})+O(n^{\ell'-1})<f(n)$, a contradiction.
 
 Suppose towards a contradiction that $r(n):=n-|M_1|$ tends to infinity. Let us write $p(n):=(m+1)(r(n))^{2/3}$ and let $j^*$ be the largest index with $|M_{j^*}|\ge p(n)$. Then by (\ref{eq}), we obtain $\sum_{j=j^*+1}^{h+1}|\cF_j|=O(n(p(n))^{\ell'-1})=O(n^{\ell'-1}p(n))$. We claim that $j^*\le (r(n))^{1/3}$. Indeed, we can argue similarly as for the upper bound on $j_1$ earlier: if $(r(n))^{1/3}\le j^*$, then for $j\le (r(n))^{1/3}$ we have $|M_j\setminus \cup_{i=1}^{j-1}M_i|\ge |M_j|-j(1+m)\ge (r(n))^{2/3}$ and $r(n)\ge \sum_{j=2}^{(r(n))^{1/3}}|M_j-\cup_{j'=1}^{j-1}M_{j'}|> (r(n))^{1/3}(r(n))^{2/3}$, a contradiction.
 This implies that $\sum_{j=2}^{j^*}|M_j|\le r(n)+(1+m) p(n)$. Applying Proposition \ref{properties} (i) and (iii) with $x=r(n)$ and $y=(1+m) p(n)$, we obtain that 
\begin{multline*}
 \cN(K_{r+t},G)=\sum_{j=1}^{h+1}|\cF_j|\le f(|M_1|)+\sum_{j=2}^{j^*}f(M_j)+O(n^{\ell'-1}p(n))\le \\ f(n-r(n))+f(r(n)+(1+m)p(n))+O(n^{\ell'-1}p(n))<f(n),  
\end{multline*}
 a contradiction.
\end{proof}

By Claim \ref{klem}, we may assume that $|M_1|\ge n-C$ for some constant $C$. By Lemma \ref{struc}, this implies $\cN(K_{r+t},G)\le f(|M_1|)+O(n^{\ell'-1})$. As $\cN(K_{r+t},G)\ge f(n)$, we must have $\cN(K_{\ell'},G[M_1\setminus A_1])\ge f(|M_1|)-D(n^{\ell'-1})$ for some constant $D$. Proposition \ref{krr-1} implies that $G[M_1\setminus A_1]$ is $K_{\ell'+1}$-free and it can be made $\ell'$-partite by deleting $o(n^2)$ edges. Let us delete those edges and let $U_1,U_2,\dots,U_{\ell'}$ be the corresponding partition. We say that a vertex $v\in U_i$ is \textit{problematic} if there exists $j\neq i$ such that there are at least $\frac{|U_j|}{(\ell')^2}$ vertices in $U_j$ not adjacent to $v$. A set of vertices $W\subset M_1\setminus A_1$ is \textit{good} if it does not contain any problematic vertices.
\begin{clm}
There exists a set $X\subset M_1\setminus A_1$ with $|X|=O(1)$ such that $M_1\setminus (A_1\cup X)$ is good.
\end{clm}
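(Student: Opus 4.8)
The goal is to remove a bounded number of vertices from $M_1\setminus A_1$ so that no problematic vertex remains. The natural approach is a greedy deletion argument driven by an upper bound on the number of problematic vertices at any stage, plus the observation that deleting one vertex can only create boundedly many new problematic ones — or, more cleanly, to first bound the total number of problematic vertices outright and then argue that this bound is actually $O(1)$.

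First I would show that the number of problematic vertices is $o(n)$. Suppose $v\in U_i$ is problematic, witnessed by $U_j$. Since $G[M_1\setminus A_1]$ is $K_{\ell'+1}$-free and its edit distance from $T(|M_1|,\ell')$ is $o(n^2)$, each $U_j$ has size $(1+o(1))|M_1|/\ell'$ and only $o(n^2)$ edges are missing between distinct parts. If there were $\Omega(n)$ problematic vertices, then the number of non-edges between parts they witness would be $\Omega(n)\cdot\Omega(n)=\Omega(n^2)$, contradicting the $o(n^2)$ edit distance; hence the set $P$ of problematic vertices satisfies $|P|=o(n)$. (One must be slightly careful that different problematic vertices may witness the same part-pair, but each contributes $\Omega(n)$ \emph{distinct} missing edges incident to itself, so the counts genuinely add up.)

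Next I would upgrade $|P|=o(n)$ to $|P|=O(1)$ using the clique-count lower bound, in the same spirit as Claim~\ref{klem} and the preceding paragraph. Removing the $o(n)$ problematic vertices from $M_1\setminus A_1$ leaves a graph $G'$ on $|M_1|-|P|$ vertices that is still $K_{\ell'+1}$-free, so $\cN(K_{\ell'},G')\le f(|M_1|-|P|+m)$ by Zykov's theorem (Theorem~\ref{sym}) applied inside $M_1$. On the other hand, each problematic (or even arbitrary) vertex lies in at most $\cN(K_{\ell'-1},T(|M_1|,\ell'))=O(n^{\ell'-1})$ copies of $K_{\ell'}$, so deleting $|P|$ vertices destroys at most $O(|P|n^{\ell'-1})$ copies; combined with $\cN(K_{r+t},G)\ge f(n)$ and the reduction $\cN(K_{r+t},G)\le \cN(K_{\ell'},G[M_1\setminus A_1])+O(n^{\ell'-1})$ from Claim~\ref{klem}, we get
\[
f(n)-O(n^{\ell'-1})\le \cN(K_{\ell'},G[M_1\setminus A_1])\le f(|M_1|-|P|+m)+O(|P|n^{\ell'-1}).
\]
Since $|M_1|\ge n-C$, if $|P|\to\infty$ then Proposition~\ref{properties}(iii) (with $x=|P|$, $y=O(1)$) gives $f(|M_1|-|P|+m)\le f(n)-\Omega(|P|n^{\ell'-2})$; but the error term on the right is $O(|P|n^{\ell'-1})$, which dominates, so this comparison alone does not close. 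To fix this I instead use that a problematic vertex lies in \emph{few} $K_{\ell'}$'s: if $v\in U_i$ is problematic then its neighborhood misses $\Omega(n)$ vertices of some part $U_j$, so the number of $K_{\ell'}$'s through $v$ is at most $\cN(K_{\ell'-1},T(|M_1|,\ell'))-\Omega(n^{\ell'-2})\cdot\Omega(n)=\cN(K_{\ell'-1},T)-\Omega(n^{\ell'-1})$, a constant-factor deficit. Summing this deficit over the $|P|$ problematic vertices (and noting a $K_{\ell'}$ is counted $\ell'$ times) shows $\cN(K_{\ell'},G[M_1\setminus A_1])\le f(|M_1|)-\Omega(|P|n^{\ell'-1})$, and then $|M_1|\ge n-C$ together with $\cN(K_{r+t},G)\ge f(n)$ forces $|P|=O(1)$. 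Taking $X=P$ finishes the claim.

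**Main obstacle.** The delicate point is exactly the bookkeeping in the last step: a crude bound (each problematic vertex kills $O(n^{\ell'-1})$ copies of $K_{\ell'}$) is \emph{not} enough, because that error term has the same order as the whole quantity $f(n)\asymp n^{\ell'}$ divided by $n$. One genuinely needs the sharper estimate that a problematic vertex lies in at most a $(1-\Omega(1))$-fraction of the maximum possible $K_{\ell'}$-count, so that $|P|$ problematic vertices cost $\Omega(|P|n^{\ell'-1})$ copies with a positive constant, which can then be compared against the $O(n^{\ell'-1})$ slack coming from $|M_1|\ge n-C$ and Lemma~\ref{struc}. Getting the constants in this deficit estimate to line up — in particular checking that missing $\Omega(n)$ vertices in one part really removes $\Omega(n^{\ell'-1})$ cliques through $v$, using that the other parts are near-complete to $v$ after the $o(n^2)$ edge deletions — is the technical heart of the argument.
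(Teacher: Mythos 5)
Your Step 1 (that the number of problematic vertices is $o(n)$) is fine, but the heart of your argument --- the per-vertex deficit estimate --- has a genuine gap, and it is exactly the point the paper's proof is structured to avoid. You claim that a problematic $v\in U_i$ lies in at most $(\text{per-vertex maximum})-\Omega(n^{\ell'-1})$ copies of $K_{\ell'}$ of $G[M_1\setminus A_1]$, ``using that the other parts are near-complete to $v$''. Neither the definition of problematic nor the global edit-distance bound gives you that: the graph $G[M_1\setminus A_1]$ is only \emph{approximately} $\ell'$-partite, so copies of $K_{\ell'}$ through $v$ need not be transversal with respect to the partition $U_1,\dots,U_{\ell'}$. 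In particular, although only $o(n^2)$ edges lie inside parts globally, a single problematic vertex may have $\Omega(n)$ neighbours inside its own part $U_i$, and each such neighbour $u$ can contribute up to $\cN(K_{\ell'-2},G[N(v)\cap N(u)])=\Theta(n^{\ell'-2})$ further cliques through $v$ (the common neighbourhood is only forced to be $K_{\ell'-1}$-free). This possible gain of order $\deg_{\mathrm{intra}}(v)\cdot\Theta(n^{\ell'-2})$ is of the same order $n^{\ell'-1}$ as the transversal deficit you extract from the missing $|U_j|/(\ell')^2$ vertices, and with constants that need not be in your favour; ruling it out requires a further argument (e.g.\ that such neighbours $u$ must themselves be missing many cross edges, by $K_{\ell'+1}$-freeness), which you do not supply. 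The lower bound $\cN(K_{\ell'},G[M_1\setminus A_1])\ge f(|M_1|)-Dn^{\ell'-1}$ lives in the graph \emph{before} the $o(n^2)$ edge deletion, so you cannot simply pretend the cliques are transversal. (A secondary, fixable, issue: summing per-vertex deficits via averaging over all vertices does not work, since non-problematic vertices of large degree can exceed the per-vertex Tur\'an count; one should instead delete the problematic vertices and compare with Theorem \ref{sym} on the remainder.)

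The paper's proof deals with precisely this obstacle by a two-stage vertex-removal argument rather than by working with the approximate partition. First, as long as $\chi(G[M_1\setminus A_1])>\ell'$, the Andr\'asfai--Erd\H os--S\'os theorem supplies a vertex of degree at most $(1-\frac{1}{\ell'-4/3}+o(1))|M_1\setminus A_1|$; since its neighbourhood is $K_{\ell'}$-free, Zykov's bound gives it a clique deficit of $\alpha n^{\ell'-1}$, so only $O(1)$ such vertices can be removed before contradicting $\cN(K_{r+t},G)\ge f(n)$ (this uses the slack $Dn^{\ell'-1}$). After this bounded set $X_1$ is removed the graph is genuinely $\ell'$-partite, every $K_{\ell'}$ is transversal, and only then does a problematic vertex visibly lose a $\frac{1}{(\ell')^2}$-fraction of its potential cliques, so again only $O(1)$ of them can exist. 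Your proposal omits this chromatic/AES step, and without it (or a substitute argument controlling intra-part edges at problematic vertices) the key deficit claim is unproven.
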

\begin{proof}[Proof of Claim.] First we find a set $X_1$ of vertices whose removal from $G[M_1\setminus A_1]$ makes the remaining graph $\ell'$-partite such that $|X_1|=O(1)$. Then we show that there are $O(1)$ problematic vertices in the remaining $l'$-partite graph.

Suppose first that $\chi(G[M_1\setminus A_1])\ge \ell'+1$. Then by a theorem of Andrásfai, Erd\H os, and Sós \cite{AES}, there exists a vertex $v$ with degree (in $G[M_1\setminus A_1]$) at most $(1-\frac{1}{\ell'-4/3}+o(1))|M_1\setminus A_1|$. As $G[M_1\setminus A_1]$ is $K_{\ell'+1}$-free, $G[N_G(v)\cap (M_1\setminus A_1)]$ is $K_{\ell'}$-free and the number of copies of $K_{r+t}$ in $G[M_1]$ containing $v$ is at most $$\cN(K_{\ell'-1},G[N_G(v)\cap (M_1\setminus A_1))\le \cN(K_{\ell'-1},T((1-\frac{1}{\ell'-4/3}+o(1))|M_1\setminus A_1|,\ell'-1)).$$
Now observe that the difference between the number of copies of $K_{\ell'}$ in $T_{\ell'}(|M_1\setminus A_1|)$ containing a fixed vertex $u$ and this number is at least $\alpha n^{\ell'-1}$ for some constant $\alpha$. So we remove $v$ and add it to $X_1$. If the remaining graph is $\ell'$-partite, then we are done with the first step, otherwise we use the Adrásfai, Erd\H os, Sós theorem to find another vertex of low degree, and so on. Observe that if $|X_1|\alpha$ is larger than $D$, then $\cN(K_{r+t},G)\le f(n)$, so indeed we can guarantee that the size of $X_1$ is bounded by a constant.

From now on, we can assume that the remaining graph $G[M_1\setminus A_1\setminus X_1]$ is $\ell'$-chromatic with partition $U_1,\dots, U_{\ell'}$. If a vertex $u\in U_i$ is problematic, then the number of copies of $K_{r+t}$ in $G[M_1]$ containing $v$ is at most $(1-\frac{1}{(\ell')^2})\prod_{j\neq i}|U_j|$, so again some $\beta n^{\ell'-1}$ smaller than in the appropriate Tur\'an graph. We remove problematic vertices one by one, let $X_2$ be the set of vertices removed this way. As in the above paragraph, if $|X_2|\beta$ is larger than $D$, then $\cN(K_{r+t},G)\le f(n)$, so indeed we can guarantee that the size of $X_2$ is bounded by a constant.
\end{proof}

We claim that if an $(r+t)$-clique $W$ contains a vertex from $V \setminus M_1$, then $W$ and $M_1\setminus X$ are disjoint. 
Indeed, assume to the contrary that for a clique $W\not\subset M_1$ we have $|W\cap (M_1\setminus X)|\ge 1$ and let $y$ be an element of $W\cap A_1$ if such an element exist and $y\in W \cap M_1\setminus (A_1\cup X)$ otherwise. 

Let us go through the indices $i$ with $y\notin U_i$ in an arbitrary order. For each $i$, we pick a vertex $v_i\in U_i \setminus W$ that is connected to $y$ and every vertex already picked. As the  number of vertices picked this way is at most $r-t-3$, at most $(r-t-3)|U_i|/(r-t-2)^2+o(n)$ vertices of $U_i$ are forbidden, thus we can pick the desired vertex. Then we can add the vertices of $A_1\cup |\{y\}$ to obtain a clique $W'$ of size $m+\ell'=r+t$. Because $y$ is in both $W$ and $W'$, we have $|W\cap W'|\ge 1$. We claim that $|W\cup W'|\ge m+2\ell'+1=2r-1$. Indeed, as $W$ contains a vertex from $V\setminus M_1$, we cannot have $A_1\subset W$ and thus by construction, we have $|W\cap W'|<m$. Observe that $1\le |W\cap W'|$ and $|W\cup W'|\ge 2r-1$ imply that $W\cup W'$ contains a copy of $B_{r,1}$. This contradiction shows that $W$ is indeed disjoint with $M_1\setminus X$.

The number of $(r+t)$-cliques disjoint with $M_1\setminus X$ is $$\binom{|(V\setminus M_1)\cup X|}{r+t}=O(1)=O(n^{\ell'-2}).$$

As a consequence we obtain that the number of $(r+t)$-cliques of $G$ meeting $V\setminus M_1$ is $o(n^{\ell'-1})$, while $f(n)-f(n-C)=\Omega(n^{\ell'-1})$ as long as $C$ is positive. Therefore, we must have that $|M_1|=n$. As the sets $M_i\setminus A_i$ are pairwise disjoint, this implies that for any $F\in \cF_j$, the subset $F\setminus A_j$ be contained in $M_1\setminus A_1$, so their total number is $O(n^m)$. That means even with the exceptional sets from $\cF_{h+1}$, the number of sets of $\cF_G\setminus\cF_1$ is $O(n^{\ell'-1})$. If there exists a set $F\in \cF_G$ with $A_1\not\subset F$ and $A_1\cap F\neq \emptyset$, then, as $G$ is $B_{r,1}$-free, every $F_1\in \cF_1$ must meet $F$ outside $A_1$, so $\cF_1$ and $\cF$ both have size $O(n^{\ell'-1})$, which contradicts $|\cF|\ge f(n)$. If $\cF$ contains a set $F$ disjoint with $A_1$, then, as $G$ is $B_{r,1}$-free, every set $F_1\in \cF_1$ is either disjoint with $F$ or $|F\cap F_2|\ge 2$, so $|\cF|\le f(n-S)+\binom{n}{\ell'-2}+|\{F\in \cF:F\cap A_1=\emptyset\}$, where $S=|\cup_{F\in\cF, |A_1\cap F|=0}F|$.  By Proposition \ref{properties} (iii),  $f(n)-f(n-S)=\Omega(n^{\ell'-1})$, and so $|\cF|<f(n)$, a contradiction.


We have established that all $F\in \cF_G$ must contain $A_1$, and then we are done by Proposition \ref{krr-1}.
\end{proof}

\section{Forbidding $B_{r,1}$ and counting small cliques}

In this section we prove Theorem \ref{newn}. Recall that it states that for $k<r$ and $n$ large enough, 
$\ex(n,K_k,B_{r,1})=\cN(K_k,T^+(n,r-1))$. We will use the asymptotic result $\ex(n,K_k,B_{r,1})=(1+o(1))\cN(K_k,T^+(n,r-1))=(1+o(1))\cN(K_k,T(n,r-1))$. It follows from a theorem of Alon and Shikhelman \cite{AS} that states that if $\chi(F)=r>k$, then $\ex(n,K_k,F)=(1+o(1))\cN(K_k,T(n,r-1))$.

\begin{proof} Let $G$ be a $B_{r,1}$-free graph on $n$ vertices.
By Theorem \ref{mq}, if $G$ has at least $\cN(K_k,T(n,r-1))-o(n^k)$ copies of $K_k$, then $G$ can be obtained from $T(n,r-1)$ by adding and removing at most $\varepsilon n^2$ edges. We consider the $(r-1)$-partite subgraph $G'$ of $G$ with the most number of edges. Let $V_1,\dots,V_{r-1}$ be the parts of $G'$, then there are $o(n^2)$ edges inside the parts $V_i$. Moreover, each vertex is connected to at most as many vertices in its part as in any other part. Also, every $V_i$ has size $(1-o(1))\frac{n}{r-1}$.

Let us pick $\alpha<(r-2)/(r-1)$, and assume first that every vertex has degree at least $\alpha n$. We partition $V_i$ to $V_i'$ and $V_i''$ with $V_i'$ containing those vertices of $V_i$ that are connected to all but $o(n)$ vertices outside $V_i$. By the assumption on the degrees, all $v\in V_i''$ are incident to $\Omega(n)$ edges inside $V_i$. This implies $|V_i''|=o(n)$. 

We will use that for any $i$, for any set $U\subset V_1'\cup \dots, V_i'$ with $|U|=O(1)$, the common neighborhood of the vertices of $U'$ contains all but $o(n)$ vertices from $V_{i+1}'\cup \dots\cup V'_{r-1}$. In particular, if we take at most four vertices from $V_1$, we can find a copy $B_{r,1}$ or $B_{r,0}$ in their common neighborhood by picking the necessary vertices from the other parts one by one.
Let $m$ be the largest number of edges inside a $V_i$, without loss of generality there are $m$ edges inside $V_1$.

Consider first the case that $m>1$. We claim that there cannot be two edges within $V_1'$. Indeed, if $u,v,w$ is a path, then in the common neighborhood of $u,v,w$ outside $V_1$, we can find two disjoint cliques $K_1,K_2$ each of size $r-2$, so $G[K_1,\cup K_2, \cup\{u,v,w\}]$ contains a copy of $B_{r,1}$. Similarly, if $uv$ and $wz$ are edges in $V_1$, then in the common neighborhood of $u,v,w,z$ outside $V_1$ one can find a copy $B$ of $B_{r-2,1}$, and $B$ together with $u,v,w,z$ form a copy of $B_{r,1}$. These contradictions prove our claim.


The remaining possibility is that 
there exists a vertex $u\in V_1''$ and thus $u$ has $n'=\Omega(n)$ neighbors in $V_1$. We also know that for any $i>1$, $u$ has at least $n'$ neighbors in $V_i$, thus at least $n'-o(n)\ge n'/2$ neighbors in $V_i'$. Let $U_i$ be an arbitrary set of $\lceil n'/2\rceil$ neighbors of $u$ in $V_i$, and $U=U_2\cup \dots, U_{r-1}$.
We can now count the copies of $K_k$. There are $o(n^k)$ copies containing an edge inside a $V_i$. Let $G''$ denote the complete $(r-1)$-partite graph with parts $V_1,\dots,V_{r-1}$.
Now compare the number of copies of $K_k$ inside $G'$ to the number of copies of $K_k$ in $G''$. We claim that that $U$ is $2K_{r-2}$-free. Indeed, if $K_1,K_2$ were two cliques of each of size $r-2$ in $U$, then the common neighborhood of the of the vertices of $K_1\cup K_2$ would contain all but $o(n)$ vertices in $V_1$, in particular all but $o(n)$ neighbors of $u$. So $K_1,K_2,u$ and two such neighbors would form a $B_{r,1}$ in $G$. This contradiction proves our claim.

The $2K_{r-2}$-free property yields that the number of copies of $K_k$ containing a vertex of $V_1$ and a $K_{k-1}$ inside $U$ is less in $G'$ by $\Theta(n^k)$ than in $G''$. Indeed, there are at most
$|V_1|\ex(|U|,K_{k-1},2K_{r-2})=(1+o(1))|V_1|\cN(K_k,T(|U|,r-3))$ such copies of $K_k$ in $G'$ and $|V_1|\cN(K_k,T(|U|,r-2))$ such copies of $K_k$ in $G''$.
This implies that $G$ has less copies of $K_k$ than a complete $(r-1)$-partite graph, which has no more copies of $K_k$ than the Tur\'an graph, completing the proof in this case.

Assume now that $m=1$, i.e. there is at most one edge inside each $V_i$. If there are at least two such edges, say $uv\in V_1$ and $xy\in V_2$, then we pick $U=\{u_1,u_3,\dots,u_{r-1}\}$ and $U'=\{v_2,u_3,v_4,\dots,v_{r-1}\}$ with $u_i,v_i\in V_i$. If both $U\cup \{x,y\}$ and $U'\cup\{u,v\}$ induce cliques, then we find $B_{r,1}$, a contradiction. Thus there is a missing edge between parts $V_i$ among these vertices, thus there are $\Omega(n)$ missing edges between parts altogether. We can again compare the number of copies of $K_k$ inside $G'$ to the number of copies of $K_k$ in the complete $(r-1)$-partite graph with parts $V_1,\dots,V_{r-1}$. The additional at most $r-1$ edges inside parts create $O(n^{k-2})$ copies of $K_k$, while the missing edges between parts are in $\Omega(n^{k-1})$ copies of $K_k$.

If there is only one edge $uv$ inside a part, say $V_1$, then we have to show that the order of the parts is as balanced as possible. 
If we have $|V_i|\ge |V_j|+2$, then we move a vertex from $V_i$ to $V_j$. The number of copies of $K_k$ not containing $uv$ increases by $\Theta(n^{k-2})$, while the number of copies of $K_k$ containing $uv$ increases by $O(n^{k-3})$, completing the proof in this case.

Assume now that there are vertices of degree less than $\alpha n$. We erase such vertices one by one till we arrive to a graph $G_0$ with no such vertices. Assume that we removed $\ell$ vertices. If $\ell<n/2$, then $G_0$ has sufficiently many vertices, thus at most $\cN(K_k,T^+(n-\ell,r-1))$ copies of $K_k$ by the above part of the proof. We removed at most $\ell \ex(\alpha n,K_{k-1},2K_{r-1})=(1+o(1))\ell\cN(K_{k-1},T(\alpha n,r-2))$ copies of $K_k$, using Theorem \ref{2kr}. If we add $\ell$ vertices to $T^+(n-\ell,r-1))$ to form the $T^+(n,r-1))$ instead, then we add $(1+o(1))\ell\cN(K_{k-1},T((r-2)n/(r-1),r-2))$ copies of $K_k$, thus we obtain more copies than in $G$, completing the proof.

If $\ell>n/2$, then we cannot apply the earlier part of the proof, since $|V(G_0)|$ may not be large enough. However, we removed at most $\ell \ex(\alpha n,K_{k-1},2K_{r-1})=(1+o(1))\ell\cN(K_{k-1},T(\alpha n,r-2))=\ell\cN(K_{k-1},T((r-2)n/(r-1),r-2))-\Theta(n^k)$ copies of $K_k$, and the resulting graph $G_0$ has at most $(1+o(1))\cN(K_k,T^+(n-\ell,r-1))$ copies of $K_k$ by the known asymptotic bound, completing the proof.

\end{proof}

\section{Forbidding $B_{3,1}$ and counting complete bipartite graphs}

Let us start this section by describing the symmetrization method due to Zykov \cite{zykov}. He used it to show that $\ex(n,K_k,K_r)=\cN(K_k,T(n,r-1))$. We say that we \textit{symmetrize} a vertex $u$ to another vertex $v$ in a graph $G$ when we delete all the edges incident to $u$ and for each edge $vw$, we add the edge $uw$. In other words, we replace the neighborhood of $u$ by the neighborhood of $v$. We apply this operation to non-adjacent vertices.
One can show that if $G$ is $K_r$-free for some $r$, then the graph $G'$ we obtain by symmetrizing $u$ to $v$ is also $K_r$-free.

Let $d(H,v)$ denote the number of copies of $H$ containing a vertex $v$. Extending Zykov's idea, Gy\H ori, Pach and Simonovits \cite{gypl} showed that if $H$ is a complete multipartite graph and $d(H,u)\le d(H,v)$, then this symmetrization does not decrease the total number of copies of $H$. Thus, for any pair of non-adjacent vertices $(u,v)$ we can symmetrize one to the other such that the total number of copies of $H$ does not decrease. We apply such symmetrization steps as long as we can find two non-adjacent vertices with different neighborhoods. At the end of the symmetrization process we obtain a $K_r$-free complete multipartite graph with at least $\cN(H,G)$ copies of $H$, which implies that $\ex(n,H,K_r)$ is attained by a complete $(r-1)$-partite graph (one also needs to show that this process terminates).

In a sense, this is the most general application of Zykov symmetrization for generalized Tur\'an problems: if $F$ is any graph that is not a clique, then symmetrization may ruin the $F$-free property. If $H$ is any graph that is not complete multipartite, then it is possible that both symmetrizing $u$ to $v$ and symmetrizing $v$ to $u$ decreases the total number of copies of $H$. However, Liu and Wang \cite{LW} introduced a restricted version of symmetrization that avoids the first of these problems. Here we state the general version of the basic idea.

\begin{proposition}
Let $G$ be a $B_{r,s}$-free graph, $u$ and $v$ be non-adjacent vertices of $G$, and assume that $v$ is not a rootlet vertex of any $B_{r,s+1}$ in $G$. Let $G'$ be the graph obtained from $G$ by symmetrizing $u$ to $v$. Then $G'$ is $B_{r,s}$-free. 
\end{proposition}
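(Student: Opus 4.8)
The plan is to argue by contradiction: suppose $G'$ contains a copy $\widetilde B$ of $B_{r,s}$. Since $G$ and $G'$ differ only in the edges incident to $u$ (in $G'$ the vertex $u$ has neighborhood $N_G(v)$, and in $G$ it had $N_G(u)$), and since $\widetilde B$ is not a subgraph of $G$, the copy $\widetilde B$ must use $u$. Note $u\neq v$ and $uv$ is not an edge of $G'$ either, so $\widetilde B$ cannot contain both $u$ and $v$ as a connected pair; in fact since $B_{r,s}$ has diameter at most $2$ and every two vertices of $B_{r,s}$ that are not adjacent lie in the two distinct pages, at most one of $u,v$ lies in $\widetilde B$, or they lie in the two different pages — but I will show it is cleaner to just replace $u$ by $v$.

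First I would replace the occurrence of $u$ in $\widetilde B$ by $v$: since every neighbor of $u$ in $G'$ is a neighbor of $v$ in $G$, and the only vertex of $\widetilde B$ possibly equal to $v$ is $v$ itself, after the swap we obtain a subgraph $\widehat B$ of $G$ on the vertex set $(V(\widetilde B)\setminus\{u\})\cup\{v\}$. Here there are two cases. If $v\notin V(\widetilde B)$, then $\widehat B$ is again a copy of $B_{r,s}$ sitting inside $G$, contradicting that $G$ is $B_{r,s}$-free. So we may assume $v\in V(\widetilde B)$. The key observation is then that $u$ and $v$ are nonadjacent, hence in $\widetilde B$ they are not both in one $r$-clique; since the two $r$-cliques overlap exactly in the $s$ rootlet vertices, one of $u,v$ is a non-rootlet (page) vertex of one page and the other lies in the other page, OR one of them is a rootlet vertex and the other a page vertex of the page disjoint from it. In every such configuration, deleting the page containing $u$ and keeping $v$ and the rest produces, after adding $v$ back into the clique structure appropriately, a copy of $B_{r,s+1}$ in $G$ in which $v$ is a rootlet vertex.

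The heart of the argument — and the step I expect to be the main obstacle — is this last bookkeeping: carefully checking that when $v\in V(\widetilde B)$, the vertices of $\widehat B$ together with the edges of $G$ contain a $B_{r,s+1}$ with $v$ as a rootlet vertex. One needs to use that $v$ inherits all of $u$'s $G'$-neighbors as $G$-neighbors, so $v$ is $G$-adjacent to everything $u$ was adjacent to in $\widetilde B$; combined with $v$'s own role in $\widetilde B$, the vertex $v$ becomes adjacent to all of $(V(\widetilde B)\setminus\{u,v\})$, i.e.\ to all $2r-s-2$ other vertices of the book. Then the two $r$-cliques of $\widetilde B$, with $u$ removed and $v$ joined to everything, span two $r$-cliques sharing the original $s$ rootlets \emph{together with} $v$ — that is a $B_{r,s+1}$ whose rootlet set contains $v$. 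This contradicts the hypothesis that $v$ is not a rootlet vertex of any $B_{r,s+1}$ in $G$, and completes the proof. I would organize the write-up so that the case $v\notin V(\widetilde B)$ is dispatched in one line, and then spend the bulk of the argument on the adjacency count that upgrades $B_{r,s}$ to $B_{r,s+1}$.
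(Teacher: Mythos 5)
Your proposal is correct and follows essentially the same route as the paper's proof: if the copy of $B_{r,s}$ in $G'$ avoids $v$, swap $u$ for $v$ to find a copy in $G$; otherwise $u$ and $v$ are necessarily page vertices on opposite pages, $v$ is $G$-adjacent to every other vertex of the copy, and replacing $u$ by $v$ produces a $B_{r,s+1}$ in $G$ with $v$ among its rootlet vertices, contradicting the hypothesis. The only imprecision is that your case list includes a configuration in which one of $u,v$ is a rootlet vertex, which cannot occur (a rootlet vertex of $B_{r,s}$ is adjacent to all other vertices of the book, while $u$ and $v$ are non-adjacent in $G'$); since that case is vacuous, your final adjacency argument is unaffected.
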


\begin{proof}
Assume that there is a copy $B$ of $B_{r,s}$ in $G'$. Then $B$ has to contain $u$, otherwise $B$ would be contained in $G$. If $B$ does not contain $v$, then we can replace $u$ with $v$ to obtain a copy of $B_{r,s}$ that is also present in $G$, a contradiction. If $B$ contains $v$, then, as $u$ and $v$ are not adjacent in $G'$, they are both page vertices of $B$, on different pages. But they have the same neighborhood, thus $v$ is connected to every vertex of $B$ but $u$. Then the $s$ rootlet vertices of $B$ with $v$ form the rootlet vertices of a copy $B'$ of $B_{r,s+1}$ in $G$, where the pages of $B'$ are the pages of $B$ without $u$  and $v$. Thus $v$ is a rootlet vertex of a $B_{r,s+1}$ in $G$, contradicting our assumption.
\end{proof}

Let us repeatedly apply symmetrization on the vertices that are not rootlet vertices of any $B_{r,s+1}$. Assume that the process terminates and let $G_0$ be the resulting graph. Let $G_1$ be the subgraph of $G_0$ induced by vertices that are not rootlet vertices of any $B_{r,s+1}$ in $G_0$. Then the proposition above implies that $G_1$ is a complete multipartite graph. Moreover, vertices of the same partite set of $G_1$ have the exact same neighborhood in the other vertices of $G_0$.


Now we are ready to prove Theorem \ref{b31} which we restate for convenience.

\begin{theorem*}
For any integers $a\le b$ and $n$ large enough, we have that $\ex(n,K_{a,b},B_{3,1})=\cN(K_{a,b},T)$ for some $n$-vertex graph $T$ that is obtained from a complete bipartite graph by adding an edge.
\end{theorem*}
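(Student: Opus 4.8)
The plan is to run the restricted Zykov symmetrization developed in this section, read off the structure of the resulting graph from the fact that it contains no bowtie, reduce to the case where its ``core'' is a complete bipartite graph, and finish by a routine optimization over the finitely many candidate extremal graphs. Concretely, let $G$ be a $B_{3,1}$-free graph on $n$ vertices with $\cN(K_{a,b},G)$ maximum. Since $K_{a,b}$ is complete multipartite, the Gy\H ori--Pach--Simonovits observation recalled above applies: for any two non-adjacent vertices $u,v$ at least one of the two symmetrizations of $u$ and $v$ to one another does not decrease $\cN(K_{a,b},\cdot)$; combined with the symmetrization proposition of this section, one may perform such a move, keeping $B_{3,1}$-freeness, whenever there are two non-adjacent vertices with distinct neighbourhoods outside the set $R$ of rootlet vertices of copies of $B_{3,2}$. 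Granting that this process terminates (as discussed before the theorem, cf.\ Liu--Wang), I obtain a $B_{3,1}$-free graph $G_0$ with $\cN(K_{a,b},G_0)\ge\cN(K_{a,b},G)$ such that, writing $R=R(G_0)$, the graph $G_1:=G_0[V(G_0)\setminus R]$ is complete multipartite, say with parts $P_1,\dots,P_k$, where all vertices of a part have the same $G_0$-neighbourhood.

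The next step is to exploit bowtie-freeness structurally. Two triangles of $G_0$ sharing a vertex must share an edge, so the triangles of $G_0$ split into vertex-disjoint \emph{blocks}, each of which is a copy of $K_4$ or a book $B_{3,m}$ ($m\ge1$), and $R$ is exactly the union of the vertex sets of the $K_4$-blocks together with the spines of the books with at least two pages; in particular the pages of every book lie in $V(G_1)$, and every edge joining two distinct blocks lies in no triangle, so its endpoints have no common neighbour. From this I would deduce that $G_1$ is complete bipartite once $n$ is large: if $k\ge3$ and two parts have size $\ge2$, a vertex of a third part with a ``crossing'' pair in each large part forms a bowtie; and if $k\ge3$ with a unique large part $P_1$, then two singleton parts $\{y\},\{z\}$ together with two vertices of $P_1$ already make $y$ a rootlet of a $B_{3,2}$ (contradicting $y\notin R$), while a vertex of $P_1$ lies in $\binom{k-1}{2}$ triangles of $G_1$, forcing $k\le 3$; altogether either $k\le 2$, so $G_1=K_{X,Y}$ (possibly with $Y=\emptyset$), or $|V(G_1)|\le 3$. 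The degenerate case $|V(G_1)|\le 3$ — in which $G_0$ is, up to $O(1)$ vertices, a disjoint union of $\Omega(n)$ copies of $K_4$ together with a triangle-free graph of crossing edges — I would rule out by a counting argument showing it has fewer copies of $K_{a,b}$ than the best complete bipartite graph on $n$ vertices. Some care is needed here, since for $K_{2,2}$ there are $B_{3,1}$-free graphs of this kind with $\Theta(n^4)$ copies (e.g.\ disjoint $K_4$'s with one ``hub'' vertex each, the hubs forming a balanced complete bipartite graph), but these lose a constant factor because the hubs are at most a quarter of $V(G_0)$ and, crucially, the crossing edges between any two $K_4$-blocks form a matching, so the remaining vertices have bounded useful degree.

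It then remains to analyse the rootlet set when $G_1=K_{X,Y}$. I would show that in an extremal $G_0$ the set $R$ is either empty or equals $\{p,q\}$ with $pq\in E(G_0)$, $p$ and $q$ adjacent to all of one side of $K_{X,Y}$ and to nothing else — i.e.\ $G_0$ is a complete bipartite graph plus at most one edge. The tools are the same: an edge leaving a block is in no triangle, so a rootlet vertex cannot be adjacent to both sides of $K_{X,Y}$, and distinct blocks interact only through sparse, matching-like, triangle-free crossing edges. Any other configuration — a $K_4$-block, a book whose pages form a proper subset of a side, two or more books, or a rootlet vertex with extra crossing edges — is eliminated either because it forces a bowtie or because a local modification (moving the ``wasted'' rootlet vertices into the bipartition and deleting superfluous edges) strictly increases $\cN(K_{a,b},\cdot)$, contradicting extremality. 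Once $G_0$ is a complete bipartite graph $K_{m,n-m}$ with at most one added edge, a direct computation of $\cN(K_{a,b},\cdot)$ as an explicit function of $m$ and of whether the edge is present exhibits the optimal $T$; for $a,b\ge2$ one additionally checks that the added edge lies in no copy of $K_{a,b}$, so a complete bipartite graph is already extremal, and the case $a=b=1$ is the classical bound $\ex(n,K_2,B_{3,1})$ recovered from Theorem~\ref{newn}.

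The main obstacle is controlling the rootlet set $R$ — the degenerate case of the third step and the entire fourth step. Order-of-magnitude estimates do not suffice: the number of copies of $K_{a,b}$ through $R$ can be as large as $\Theta(|R|\,n^{a+b-1})$, which is precisely the order of the loss incurred by shrinking the bipartite core, so one must use the fine structure (triangle-free, matching-like crossing edges; one-sided neighbourhoods of rootlet vertices) together with a careful exchange argument, rather than crude bounds, to show that every ``non-bipartite'' feature is strictly suboptimal.
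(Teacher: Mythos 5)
Your route is the paper's route (restricted symmetrization, a complete multipartite core on the non-rootlet vertices with at most three parts, a counting argument disposing of the degenerate case built from $K_4$-blocks, and exchange arguments cleaning up the rootlet part), but there is one genuine gap: the termination of the symmetrization process. You write ``granting that this process terminates (as discussed before the theorem, cf.\ Liu--Wang)'', yet the discussion preceding the theorem only \emph{assumes} termination, and it is not automatic here. In an extremal graph, non-adjacent vertices of $Q$ have equal $d(K_{a,b},\cdot)$, so the number of copies of $K_{a,b}$ is constant along the allowed moves and cannot serve as a monovariant; moreover, symmetrization can create and destroy copies of $B_{3,2}$, so the set of admissible (non-rootlet) vertices changes from step to step. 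The paper has to work for this: it first shows that every non-rootlet vertex is a page vertex of at most one book (a short $B_{3,1}$-freeness argument), then prescribes the direction of each symmetrization so as always to merge into the book with more pages, and verifies that for every $i$ the total number of pages in the $i$ largest books never decreases and strictly increases for some $i$, each such quantity being at most $in$; this is what makes the process finite. Without some such rule and monovariant, your first step is unjustified, and everything downstream depends on it.

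The remaining steps follow the paper's plan but are left as intentions exactly where the technical work lies. Eliminating the $K_4$-blocks and the surplus rootlet structure is done in the paper by an explicit redistribution of the $K_4$-vertices into the two sides (the graph $G_0'$), with a case split $p\ge 4$ versus $1\le p\le 3$ (the latter needing $|A_1|,|A_2|=\Omega(n)$ so that an $O(n^{b-1})$ loss is outweighed by an $\Omega(n^{a+b-1})$ gain), and the case of two special edges $uv$, $u'v'$ is settled by deleting $u'v'$ and adding $uu'$, $vv'$, using that $u,v,u',v'$ span no triangle. Your diagnosis that crude order-of-magnitude bounds do not suffice is correct, and your proposed local modifications are of the same kind as the paper's, so I view this part as an unexecuted but essentially sound plan rather than a wrong approach; still, as written, neither the degenerate-case count nor the exchange claims are proved, and together with the missing termination argument the proposal falls short of a complete proof.
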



\begin{proof}
Let $G$ be an $n$-vertex $B_{3,1}$-free graph with $\ex(n,H,B_{3,1})$ copies of $H=K_{a,b}$. Let $Q$ denote the set of vertices in $G$ that are not rootlet vertices of a $B_{3,2}$. If there are two non-adjacent vertices $u$ and $v$ in $Q$ with $d(H,u)<d(H,v)$, then we symmetrize $u$ to $v$ and obtain a $B_{3,1}$-free graph with more copies of $H$, a contradiction. Thus we can assume that non-adjacent vertices in $Q$ have the same $d(H,v)$ value (and also later, after any symmetrization). This means that for non-adjacent vertices $u$ and $v$, we can choose whether we symmetrize $u$ to $v$ or $v$ to $u$ and the total number of copies of $H$ will not change. 

Recall that $B_{3,2}$ consists of two triangles sharing an edge. We call the graph consisting of $k\ge 2$ triangles sharing an edge a \textit{book graph with $k$ pages}. Observe that for any vertex $v\in Q$ there exists at most one book $B_v$ of which $v$ is a page vertex. Indeed, the two books have at least 3 rootlet vertices. If the two books have four rootlet vertices, we find a $B_{3,1}$. Thus the two books have at least 3 rootlet vertices $x,y,z$, each connected to $u$, such that $xy$ and $yz$ are in $E(G)$. If these three vertices and $v$ induce a $K_4$, then $v$ is a rootlet vertex of a $B_{3,2}$, a contradiction. Otherwise, there is a fifth vertex $w$ in one of the books, without loss of generality $w$ is joined to $x$ and $y$ with an edge. Then $x,y,w$ and $y,z,v$ form triangles sharing exactly one vertex, a contradiction.


Let $u$ and $v$ be page vertices from different copies of $B_{3,2}$ that are not rootlet vertices of any $B_{3,2}$. If $B_u$ has more pages than $B_v$, then we symmetrize $v$ to $u$. If they have the same number of pages, then we symmetrize arbitrarily.
If $u$ is the page vertex of a $B_{3,2}$ but $v$ is not, then we symmetrize $v$ to $u$. We claim that after such a symmetrization, 
the total number of pages in the largest $i$ books does not decrease for any $i$ and increases for some $i$. Indeed, observe that if $B_v$ is among the largest $i$ books, then so is $B_u$. We deleted at most one page containing $v$, but added a page when we connected $v$ to the neighbors of $u$. This proves the first part of the claim. Assume that there are $i-1$ books with more pages than $B_u$. Those are unaffected by the symmetrization, and $B_u$ has one more page, proving the second part of our claim. 

Let us apply such symmetrization steps as long as we can. We claim that after finitely many steps this process terminates. Indeed, for any $i$ the total number of pages in the largest $i$ books can increase at most $in$ times.

Let $G_0$ be the resulting graph, $G_1$ be the subgraph induced on the vertices that are not rootlet vertices of any $B_{3,2}$ and $G_2$ be the subgraph induced by the other vertices. Note that the vertex set of $G_1$ might be different from $Q$, as symmetrization may destroy or create copies of $B_{3,2}$. Non-adjacent vertices of $G_1$ have the same neighborhood, as otherwise we would symmetrize one vertex to another. This means that $G_1$ is a complete $m$-partite graph for some $m$ with partite sets $A_1,\dots,A_m$. Observe that $m\le 3$ because there are no rootlet vertices in $G_1$. 


Observe that in a page vertex of a $B_{3,2}$ can be a rootlet vertex of a $B_{3,2}$ only if the book is actually a $K_4$, as otherwise one can easily find a $B_{3,1}$. Furthermore, every vertex outside the $K_4$ can have at most one neighbor inside that $K_4$. This implies that for every $i\le m$, we have that the vertices of $A_i$ have at most one neighbor in a $K_4$. If $u$ and $v$ are the rootlet vertices of a $B_{3,2}$ that is not a $K_4$, then its page vertices are in $G_1$, and belong to the same partite set $A_i$. It is easy to see that if $u'v'$ is an edge in $G_2$ and $\{u',v'\}\neq \{u,v\}$, then we cannot have
that $u'$ and $v'$ are both connected to a vertex $w\in A_i$. Indeed, if $u,v,u',v'$ are four vertices, then they form $B_{3,1}$ with $v$. If, say, $u=u'$, then $u,v',w$ and $u,v,w'$ for any page vertex $w'$ from $B_w$ form two triangles sharing exactly one vertex.
In other words, the only copies of $B_{3,2}$ with page vertices in $A_i$ are those with rootlet vertices $u$ and $v$.

Assume first that $m=3$. Then $G_1$ is a triangle because there are no rootlet vertices in $G_1$.
Then $G_0$ is the vertex-disjoint union of a $K_3$ and several copies of $K_4$, with additional edges between these subgraphs. However, every vertex $v$ is connected to at most one vertex in every copy of $K_4$ and $K_3$ (except the one containing $v$). This means that the degree of $v$ is at most $d:=2+(n+1)/4$. Then we can count the copies of $K_{a,b}$ the following way. Assume that $a\neq b$; if $a=b$, then each of our bounds are divided by 2 because of symmetry. Either we pick a vertex $v$, $a$ of its neighbors and $b-1$ of their common neighbors other than $v$, or pick $b$ neighbors of $v$ and $a-1$ of their common neighbors other than $v$. This way we count every copy of $K_{a,b}$ exactly $a+b$ times, since we can pick any vertex as the first vertex. Thus we obtain the upper bound $n(\binom{d}{a}\binom{d}{b-1}+\binom{d}{b}\binom{d}{a-1})/(a+b)$. We can count the copies of $K_{a,b}$ in $K_{\lfloor n/2\rfloor,\lceil n/2\rceil}$ the same way and we get $n(\binom{d'}{a}\binom{d'}{b-1}+\binom{d'}{b}\binom{d'}{a-1})/(a+b)$ copies, where $d'=\lfloor n/2\rfloor$. Clearly this is a larger number if $n$ is large enough, a contradiction to our assumption that $G$ is extremal.

Assume now that $m\le 2$. We will handle together the cases $m=1$ and $m=2$; in what follows $A_2$ may be empty.
Observe that $G_2$ contains at most two edges $uv$ and $u'v'$ such that $u,v$ are connected to the vertices in $A_1$ and $u',v'$ are connected to the vertices of $A_2$. $G_2$ also may contain several copies of $K_4$, with additional edges between these subgraphs. However, at most one vertex of those copies of $K_4$ can be connected to vertices in $A_i$ for every $i$. Also, such a vertex can be connected to vertices of at most one of $A_1$ and $A_2$ otherwise $G_0$ would contain a $K_4$ and a $K_3$ meeting in exactly one vertex, thus a $B_{3,1}$.

Let us assume that there are $p\ge 1$ copies of $K_4$ in $G_0$ and let $U$ be the set of their vertices. Recall that $G_2$ has at most 4 other vertices: $u,v$ connected to the  vertices in $A_1$ and $u',v'$ are connected to the vertices of $A_2$. Let us obtain $G_0'$ the following way. For each vertex $w\in U$, we delete the edges incident to $w$. If $w$ is connected to the vertices of $A_1$, we add $w$ to $A_2'$, and if $w$ is connected to the vertices of $A_2$, we add $w$ to $A_1'$. Then we add the remaining vertices of $U$ to $A_1'$ and $A_2'$ such that for each $K_4$, 2 of its vertices are in $A_1'$ and 2 are in $A'_2$. Finally, we add $A_1$ to $A_1'$ and $A_2$ to $A_2'$. Then we connect the vertices of $A_1'$ to the vertices of $A_2'$ and to $u$ and $v$, and similarly we connect the vertices of $A_2'$ to $u'$ and $v'$. Clearly $G_0'$ is $B_{3,1}$-free.


\begin{clm}
$G_0'$ contains more copies of $K_{a,b}$ than $G_0$.
\end{clm}

\begin{proof}
Assume first that $p\ge 4$. Then the degree of a vertex $w$ has not decreased. If $w\in A_1\cup A_2$, then the edges incident to $w$ in $G_0$ are also in $G_0'$. Otherwise,
$w$ was connected to at most $p+2$ vertices of $U$ in $G_0$ and is connected to $2p$ vertices of $U$ in $G_0'$. Furthermore, if $w$ was connected to vertices in $G_1$, those edges remain intact. Finally, $w$ was connected to at most 3 vertices out of $u,v,u',v'$. 

Consider a copy $H_0$ of $K_{a,b}$ in $G_0$ that intersects $G_1$ in a non-empty set $X$ of vertices. If $X$ intersects both partite sets of $G_1$, then $H_0$ is also in $G_0'$, as the edges of $G_0$ that are incident to vertices of $G_1$ are present in $G_0'$. If $X$ is a subset of $A_1$, then they belong to the same partite set of $H_0$ and the other partite set of $H_0$ belongs to the set of vertices connected to $A_1$ in $G_0$. Thus we have to pick the remaining vertices of $H_0$ from the common neighbors of these vertices; they have more common neighbors in $G_0'$, thus we can find more copies of $K_{a,b}$ this way in $G_0'$. This shows that there are more copies of $K_{a,b}$ in $G_0'$ intersecting $A_1\cup A_2$ than in $G_0$.

Finally, if a copy of $K_{a,b}$ does not intersect $G_1$, then we can pick it the following way. We pick a vertex $w$, then we pick either $b$ of its (at most $p+2$) neighbors in $G_2$ and $a-1$ of the at most $(p+2$) common neighbors of those in $G_2$, or we pick $a$ of its neighbors and $b-1$ of their common neighbors. We can pick more copies of $K_{a,b}$ in $G_0'$ the same way, as there are $2p$ neighbors of $w$ from $V(G_2)$ in $G_0'$ and those have at least $2p$ common neighbors from $V(G_2)$ in $G_0'$.

Assume now that $1\le p\le 3$. Observe that if $|A_1|=o(n)$ or $|A_2|=o(n)$ then we have $o(n^{a+b})$ copies of $K_{a,b}$ in $G_0$, less than in $K_{\lfloor n/2\rfloor,\lceil n/2\rceil}$, a contradiction. In a $K_4$, at most two vertices are connected to vertices in $G_1$, and the other at least two vertices have degree at most $4+p\le 7$ in $G_0$. By deleting the edges incident to them we removed $O(n^{b-1})$ copies of $K_{a,b}$. By adding them to $A_1$ or $A_2$, we added $\Omega(n^{a+b-1})$ copies of $K_{a,b}$, a contradiction.
\end{proof}

The above claim finishes the proof if $p\ge 1$.
Finally, if $p=0$, then $G_0$ without $uv$ and $u'v'$ is a complete bipartite graph, thus we are done if $m=1$ or if $u'v'$ does not exist. 
Observe that a triangle on the vertices $u,v,u',v'$ would create a $B_{3,1}$ with a vertex of $A_1$ or $A_2$. Thus, we can assume without loss of generality that $uu'$ and $vv'$ are not edges of $G_0$. Let $G_0''$ be the graph obtained by deleting from $G_0$ the edge $u'v'$ and adding $uu'$ and $vv'$. We have removed some copies of $K_{a,b}$ only if $a=b=2$ or if $a=1$. In the first case, we removed only one copy, and we created more copies of $K_{2,2}$. In the second case, the degree of every vertex remained the same or (for two vertices) increased. Thus in both cases, the number of copies of $K_{a,b}$ increased, a contradiction.
\end{proof}

\vskip 0.3truecm

\textbf{Funding}: Research supported by the National Research, Development and Innovation Office - NKFIH under the grants KH 130371, SNN 129364, FK 132060, and KKP-133819,  and by the Ministry of Education and Science of the Russian Federation in the framework of MegaGrant no. 075-15-2019-1926.

\vskip 0.3truecm

The authors declare that they have no known competing financial interests or personal relationships
that could have appeared to influence the work reported in this paper.


\begin{thebibliography}{99}
\bibitem{AK}
R. Ahlswede, L.H. Khachatrian, The complete nontrivial-intersection theorem for systems of finite sets, \textit{Journal of Combinatorial Theory, Series A}, 76(1), (1996), 121--138.

\bibitem{AS}
N. Alon, C. Shikhelman, Many $T$ copies in $H$-free graphs, \textit{Journal of Combinatorial Theory, Series B}, 121, (2016) 146--172.

\bibitem{AES}
B. Andrásfai, P. Erd\H os, V.T. Sós, On the connection between chromatic number, maximal clique and minimal degree of a graph, \textit{Discrete Mathematics}, 8(3), (1974), 205--218.

\bibitem{BGy}
B. Bollobás, E. Gy\H ori, Pentagons vs. triangles, \textit{Discrete Mathematics}, 308(19), (2008), 4332--4336.

\bibitem{cgpw} G. Chen, R. J. Gould, F. Pfender, B. Wei, Extremal graphs for intersecting cliques,
\textit{Journal of Combinatorial Theory Series B}, 89, (2003) 159--171.

\bibitem{cemsz} L. H. Clark, R. C. Entringer, J. E. McCanna, L. A. Sz\'ekely, Extremal problems
for local properties of graphs, \textit{Australasian Journal of Combinatorics}, 4, (1991), 25--32.

\bibitem{efgg} P. Erd\H os, Z. F\"uredi, R.J. Gould, D. S. Gunderson, Extremal graphs for intersecting
triangles. {\it Journal of Combinatorial Theory, Series B}, 64, (1995), 89--100.

\bibitem{E}
P. Erd\H os, On some new inequalities concerning extremal properties of graphs, in 1968 Theory of Graphs (Proc. Colloq., Tihany, 1966) 77--81.

\bibitem{efr}
P. Erd\H os, P. Frankl, V. Rödl,  The asymptotic number of graphs not containing a fixed subgraph and a problem for hypergraphs having no exponent, \textit{Graphs and Combinatorics}, 2(1), (1986), 113--121.


\bibitem{ES}
P. Erd\H os, M. Simonovits, A limit theorem in graph theory,
\textit{Studia Scientarum Mathematicarum  Hungarica} 1, (1966), 51--57.


\bibitem{F}
P. Frankl, On intersecting families of finite sets. \textit{Journal of Combinatorial Theory, Series A}, 24(2), (1978), 146--161.

\bibitem{FF}
P. Frankl, Z. Füredi, Forbidding just one intersection, \textit{Journal of Combinatorial Theory, Series A}, 39(2), (1985), 160--176.


\bibitem{FS}
Z. Füredi, M. Simonovits, The history of degenerate (bipartite) extremal graph problems, In Erdős Centennial, pp. 169--264. Springer, Berlin, Heidelberg, 2013.

\bibitem{ger} D. Gerbner, Generalized Turán problems for small graphs, arXiv preprint
arXiv:2006.16150, 2020.

\bibitem{ger2} D. Gerbner, On Tur\'an-good graphs, \textit{Discrete Mathematics}
344(8),  (2021), 112445

\bibitem{gerbn} D. Gerbner, Counting multiple graphs in generalized Tur\'an problems, arXiv preprint
arXiv:2007.11645, 2020.

\bibitem{gmmp} D. Gerbner, T. M\'esz\'aros, A. Methuku, C. Palmer, Generalized rainbow Tur\'an
problems, arXiv preprint arXiv:1911.06642, 2019.

\bibitem{gmv} D. Gerbner, A. Methuku, M. Vizer, Generalized Turán problems for disjoint copies of graphs, \textit{Discrete Mathematics}, 342(11), (2019), 3130--3141.

\bibitem{gerpal} D. Gerbner, C. Palmer, Some exact results for generalized Tur\'an problems, arXiv
preprint arXiv:2006.03756, 2020.

\bibitem{GJ}
W.T. Gowers, B. Janzer,  Generalizations of the Ruzsa–Szemerédi and rainbow Turán problems for cliques. \textit{Combinatorics, Probability and Computing}, 30(4), (2021), 1--18.

\bibitem{G}
A. Grzesik, On the maximum number of five-cycles in a triangle-free graph. \textit{Journal of Combinatorial Theory, Series B}, 102(5), (2012), 1061--1066.

\bibitem{gypl} E. Gy\H ori, J. Pach, M. Simonovits, On the maximal number of certain subgraphs
in $K_r$-free graphs, \textit{Graphs and Combinatorics}, 7(1), (1991), 31--37.

\bibitem{Hetal}
H. Hatami, J. Hladký, D. Král', S. Norine, A. Razborov, On the number of pentagons in triangle-free graphs, \textit{Journal of Combinatorial Theory, Series A}, 120(3), (2013), 722--732.

\bibitem{Ka}
G. O. H. Katona, A theorem of finite sets, in: Theory of Graphs, Proc.
Colloq. Tihany, 1966, pp. 187--207, Akad. Kiadó, Budapest, 1968.

\bibitem{Kr}
J. B. Kruskal, The number of simplices in a complex, in: Math. Optimization Techniques, pp. 251--278, Univ. of Calif. Press, Berkeley, 1963.

\bibitem{LW}
E.L. Liu, J. Wang,  The Generalized Tur\'{a}n Problem of Two Intersecting Cliques, arXiv preprint arXiv:2101.08004, 2021.

\bibitem{L}
L. Lovász, \textit{Combinatorial problems and exercises}, Amsterdam: North-Holland. 1993.


\bibitem{MQ}
J. Ma, Y. Qiu, Some sharp results on the generalized Turán numbers,
\textit{European Journal of Combinatorics}, 84, (2020), 103026.

\bibitem{moon} J. W. Moon, On independent complete subgraphs in a graph, \textit{Canadian Journal of Mathematics}, 20, (1968), 95--102.

\bibitem{rsz} I. Z. Ruzsa E. Szemer\'edi, Triple systems with no six points carrying three triangles.
Combinatorics (Keszthely, 1976), \textit{Coll. Math. Soc. J. Bolyai}, 18, (1978), 939--945.

\bibitem{scth} Richard H. Schelp, Andrew Thomason,
A Remark on the Number of Complete and Empty Subgraphs. \textit{Comb. Probab. Comput.} 7(2), (1998), 217--219.

\bibitem{S}
M. Simonovits, A method for solving extremal problems in graph theory, stability problems. In Theory of Graphs (Proc. Colloq., Tihany, 1966), pp. 279--319. 1968.

\bibitem{So}
V.T. Sós, Remarks on the connection of graph theory, finite geometry and block designs. \textit{Téorie Combinatorie} 2, (1976), 223--233.




\bibitem{zykov} A. A. Zykov, On some properties of linear complexes, \textit{Matematicheskii sbornik},
66(2), (1949), 163--188.
\end{thebibliography}
\end{document}